\documentclass[12pt,a4paper]{article}
\usepackage[utf8]{inputenc}
\usepackage{amsmath}
\usepackage[numbers]{natbib}
\usepackage{hyperref}
\usepackage{breqn}
\usepackage{amsthm}
\usepackage{amsfonts}
\usepackage{amssymb} 
\usepackage{graphicx}
\usepackage[left=2cm,right=2cm,top=2cm,bottom=2cm]{geometry}
\usepackage{lineno,hyperref}
\modulolinenumbers[5]

\newtheorem{theorem}{Theorem}[section]
\newtheorem{corollary}{Corollary}[section]
\newtheorem{lemma}{Lemma}[section]

\newtheorem*{keyword}{Keywords}
\newtheorem*{remark}{Remark}
\newtheorem{example}{Example}[section]
\newtheorem*{definition}{Definition}

\begin{document}
\title{Repeated Sums and Binomial Coefficients}
\author{Roudy El Haddad \\
Universit\'e La Sagesse, Facult\'e de g\'enie, Polytech \\
\href{roudy1581999@live.com}{roudy1581999@live.com}}
\date{}
\maketitle

\begin{abstract}
Binomial coefficients have been used for centuries in a variety of fields and have accumulated numerous definitions. In this paper, we introduce a new way of defining binomial coefficients as repeated sums of ones. A multitude of binomial coefficient identities will be shown in order to prove this definition.  Using this new definition, we simplify some particular sums such as the repeated Harmonic sum and the repeated Binomial-Harmonic sum. We derive formulae for simplifying general {\it repeated sums} as well as a variant containing binomial coefficients. 
Additionally, we study the $m$-th difference of a sequence and show how sequences whose $m$-th difference is constant can be related to binomial coefficients.    
\end{abstract}
\begin{keyword}
{\em Binomial Coefficients, Binomial Sums, $m$-th Difference, Repeated Sequences, Repeated Sums, Harmonic Sums, Binomial-Harmonic Sums. } \\  
\thanks{\bf{MSC 2020:}{ 05A10}}
\end{keyword}


\section{Introduction}
The author is interested in the study of the various types of repetitive sums, that is, developing formulae and identities to better understand these types of sums as well as to simplify the way we work with them. Previously, the author has presented a set of useful formulae for two types of repetitive sums: recurrent sums \cite{RecurrentSums} and multiple sums \citep{MultipleSums}. In this article, we aim to do the same for a third type of repetitive sums: the repeated sums. For the first two types, partitions were key for developing a reduction theorem to simplify these sums. For this type of sums, binomial coefficients are key. Hence, we first need to develop a set of binomial coefficient identities as well as we need to introduce a new definition for binomial coefficients. 
Binomial coefficients appear in many distinct fields of mathematics (including probability, combinatorics, analysis, etc.) and, therefore, they can be defined in many different ways. 
In 1654, in his article ``Trait{\'e} du triangle arithm{\'e}tique'' \cite{pascal1978traite,pascal2011traite}, Pascal presented what is now known as Pascal's triangle and which constituted the first definition of binomial coefficients.
The binomial coefficient $\binom{n}{k}$ represents the element in the $k$-th column and $n$-th row of Pascal’s triangle. They can also be defined by the recurrent relation used to construct Pascal's triangle which we will call the Pascal's triangle identity and which states that $\binom{n}{k}=\binom{n-1}{k}+\binom{n-1}{k-1}$. 
The notation $\binom{n}{k}$ was introduced by Euler in the 18th century. 
Alternatively, binomial coefficients can be denoted as $C_n^k$. This second notation, introduced in the 19th century, is mostly used in combinatorics where binomial coefficients are defined as the number of ways of choosing an unordered subset of $k$ elements from a set of $n$ elements. 
Binomial coefficients are also intensively used in probability calculation for binomial distributions. Let X be a random variable, let $n$ be the number of experiments, let $k$ be an integer, and let $p$ represent the probability of success in a single experiment, the binomial distribution formula is given by 
$
P(X=k)=C_n^k p^{k}(1-p)^{n-k}
$. 
Additional definitions of these coefficients appear in analysis and calculus. First, they appear in the binomial theorem as well as in the generalized binomial theorem presented by Newton \cite{whiteside1961newton,whiteside1961henry}, 
$
(u+v)^n=\sum_{k=0}^{n}{\binom{n}{k}u^{k}v^{n-k}}
$
and 
$
(u_1+ \cdots +u_m)^n=\sum_{k_1+\cdots+k_m=n}{\binom{n}{k_m}\binom{n-k_m}{k_{m-1}}\cdots\binom{n-k_m-\cdots-k_1}{k_2}u_1^{k_1}u_2^{k_2}\cdots u_m^{k_m}}
$.  
From the simple case of the theorem, the binomial coefficient $\binom{n}{k}$ can be defined as the coefficient of $u^k v^{n-k}$ in the expansion of $(u+v)^n$. 
Likewise, $\binom{n}{k}$ appears in Leibniz's formula for the $n$-th derivative of a product \cite{polking1972leibniz,mazkewitsch1963n,dybowski2009generalization}, 
$
(uv)^{(n)}=\sum_{k=0}^{n}{\binom{n}{k}u^{(k)}v^{(n-k)}}
$. 
Hence, Leibniz's rule defines the binomial coefficient $\binom{n}{k}$ as the coefficient of $u^{(k)}v^{(n-k)}$ in the expansion of $(uv)^{(n)}$. 
These coefficients even appear in number theory: In \cite{RecurrentSums,MultipleSums}, binomial coefficients are defined as a sum over partitions of an integer $m$ in several ways. 
And, finally, binomial coefficients can be defined in terms of factorials: $\binom{n}{k}=\frac{n!}{k!(n-k)!}$. 
\begin{remark}
{\em Tables of binomial coefficients can be found in \cite{bol1983mathematical,miller1954table,royal1954mathematical}.  }
\end{remark}
In this study, we propose a new definition: we define binomial coefficients as repeated sums (Section \ref{Binomial Coefficients as Repeated Sums}). This definition is key for the determination of a reduction formula for repeated sums which allows the reduction of such sums into simple non-repetitive sums. 
In Section \ref{Binomial Coefficients as Repeated Sums}, we also present formulas for the $m$-th difference of a sequence as well as we show the link between sequences with a constant $m$-th difference and binomial coefficients. 
In Section \ref{Sums of Binomial Coefficients}, we present some formulae for binomial sums and repeated binomial sums which are needed to prove the definition. 
In Section \ref{Reduction of Binomial-Harmonic Sums}, this new definition is used to derive results related to the harmonic sum such as an expression for the repeated harmonic sum in terms of the simple harmonic sum. We also derive expressions for a modified version of the repeated harmonic sum which we will refer to as the repeated binomial-harmonic sum. Then, in Section \ref{Application to the Calculation of Repeated Sums}, this definition as well as the results derived will be utilized to develop a reduction formula for the repeated sum of any sequence $a_N$. A similar expression is also derived for a modified version of repeated sums which we will refer to as the repeated “binomial-sequence” sum. 
\section{Sums of binomial coefficients} \label{Sums of Binomial Coefficients}
In this section, we prove some formulas related to the sums and repeated sums of binomial coefficients. These formulas are needed in order to produce the proposed definition for binomial coefficients. They are also essential for developing the harmonic sum identities later on. Furthermore, they are crucial for developing an expression for repeated sums in terms of single sums. 
\subsection{Binomial sum}
We begin by determining a formula for the sum of binomial coefficients. 
\begin{theorem} \label{t 2.1}
For any $k,q,n \in \mathbb{N}$ such that $n \geq q$, we have 
$$
\sum_{i=q}^{n}{\binom{i}{k}}
=\binom{n+1}{k+1}-\binom{q}{k+1}
.$$
\end{theorem}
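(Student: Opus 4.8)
The plan is to prove the identity
$$\sum_{i=q}^{n}\binom{i}{k}=\binom{n+1}{k+1}-\binom{q}{k+1}$$
by induction on the upper limit $n$, using Pascal's triangle identity as the engine. This is the classic "hockey stick" identity in a slightly generalized form (with an arbitrary lower limit $q$ rather than $q=k$), and the telescoping structure is what makes it clean.

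First I would fix $k$ and $q$ and induct on $n\ge q$. For the base case $n=q$, the left-hand side is the single term $\binom{q}{k}$, and I need to check that $\binom{q+1}{k+1}-\binom{q}{k+1}=\binom{q}{k}$. This is exactly Pascal's identity $\binom{q+1}{k+1}=\binom{q}{k+1}+\binom{q}{k}$ rearranged, so the base case is immediate. For the inductive step, assuming the formula holds for some $n\ge q$, I would split off the last term:
$$\sum_{i=q}^{n+1}\binom{i}{k}=\left(\sum_{i=q}^{n}\binom{i}{k}\right)+\binom{n+1}{k}=\binom{n+1}{k+1}-\binom{q}{k+1}+\binom{n+1}{k}.$$
Then applying Pascal's identity in the form $\binom{n+1}{k+1}+\binom{n+1}{k}=\binom{n+2}{k+1}$ collapses the first and third terms, yielding $\binom{n+2}{k+1}-\binom{q}{k+1}$, which is precisely the claimed formula with $n$ replaced by $n+1$. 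This completes the induction.

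An alternative, perhaps more transparent, route is a pure telescoping argument. Using Pascal's identity rewritten as $\binom{i}{k}=\binom{i+1}{k+1}-\binom{i}{k+1}$, each summand becomes a difference of consecutive terms of the sequence $b_i=\binom{i}{k+1}$. Summing from $i=q$ to $n$ then telescopes directly to $\binom{n+1}{k+1}-\binom{q}{k+1}$, with no explicit induction needed. I find this version cleaner to write, though it secretly relies on the same identity.

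I do not anticipate a genuine obstacle here, since the result is elementary. The only point requiring mild care is the treatment of boundary cases under the factorial or combinatorial convention: for instance, when $k>i$ one needs $\binom{i}{k}=0$, and the identity should remain valid when $q\le k$ (where the low-index terms vanish) as well as in the degenerate case $n=q$. I would state explicitly which convention for $\binom{n}{k}$ is in force so that the telescoping endpoints are unambiguous, and confirm that Pascal's identity, which the paper has already recalled, holds across these boundary values.
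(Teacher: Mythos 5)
Your proof is correct and follows essentially the same route as the paper's: induction on $n$ with base case $n=q$ verified via Pascal's identity, and the inductive step splitting off the term $\binom{n+1}{k}$ and recombining with Pascal's identity. The telescoping variant you mention is equivalent, and your attention to the convention $\binom{i}{k}=0$ for $i<k$ matches the remark the paper attaches to this theorem.
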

\begin{remark}
{\em The coefficients of the form $\binom{n}{k}$ where $n<k$, being undefined according to the factorial definition, are supposed zero. Hence, if $q<k$, we can start the sum at $k$. }
\end{remark}
\begin{proof}
1. Base case: verify true for $n=q$. \\
Using Pascal's Triangle identity,  
$$
\sum_{i=q}^{q}{\binom{i}{k}}
=\binom{q}{k}
=\binom{q+1}{k+1}-\binom{q}{k+1}
.$$
2. Induction hypothesis: assume the statement is true until $n$. 
$$
\sum_{i=q}^{n}{\binom{i}{k}}
=\binom{n+1}{k+1}-\binom{q}{k+1}
.$$
3. Induction step: we will show that this statement is true for $(n+1)$. \\
We have to show the following statement to be true:  
$$
\sum_{i=q}^{n+1}{\binom{i}{k}}
=\binom{n+2}{k+1}-\binom{q}{k+1}
.$$
$ \\ $ 
$$
\sum_{i=q}^{n+1}{\binom{i}{k}}
=\binom{n+1}{k}+\sum_{i=q}^{n}{\binom{i}{k}}
=\binom{n+1}{k}+\binom{n+1}{k+1}-\binom{q}{k+1}
.$$ 
From Pascal's Triangle identity, we get the case for $(n+1)$. 
\end{proof}
\begin{corollary} \label{c 2.1}
For $q=0$ or $q=k$, Theorem \ref{t 2.1} becomes 
$$
\sum_{i=0}^{n}{\binom{i}{k}}
=\sum_{i=k}^{n}{\binom{i}{k}}
=\binom{n+1}{k+1}
.$$
\end{corollary}
\begin{corollary} \label{c 2.2}
The shifted binomial sum can be calculated as follows 
$$
\sum_{i=m}^{n+m}{\binom{i}{k}}
=\sum_{i=0}^{n}{\binom{i+m}{k}}
=\binom{n+m+1}{k+1}-\binom{m}{k+1}
.$$
\end{corollary}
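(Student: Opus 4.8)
The plan is to obtain both equalities directly from Theorem \ref{t 2.1}, since this corollary is nothing more than the instance $q = m$ of that theorem together with a cosmetic reindexing of the summation variable. Before invoking the theorem I would check its hypothesis: applying Theorem \ref{t 2.1} with lower index $q = m$ and upper index $n + m$ requires $n + m \geq m$, which holds automatically because $n \in \mathbb{N}$. This is the only place where admissibility must be verified, and it presents no difficulty.

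I would then establish the leftmost equality by the change of summation variable. Setting $j = i - m$, as $i$ ranges from $m$ to $n+m$ the new index $j$ ranges from $0$ to $n$, and the summand transforms as $\binom{i}{k} = \binom{j+m}{k}$; hence $\sum_{i=m}^{n+m}\binom{i}{k} = \sum_{j=0}^{n}\binom{j+m}{k}$, and renaming $j$ back to $i$ produces the middle expression. This step is purely notational.

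Finally, to reach the closed form I would apply Theorem \ref{t 2.1} to the sum $\sum_{i=m}^{n+m}\binom{i}{k}$ with $q = m$ and with $n+m$ playing the role of the upper index $n$ in the theorem. This yields $\binom{(n+m)+1}{k+1} - \binom{m}{k+1} = \binom{n+m+1}{k+1} - \binom{m}{k+1}$, completing the chain of equalities. Since every step is a direct specialization or a routine reindexing, there is no substantive obstacle here; the only care required is to confirm that the condition $n \geq q$ of Theorem \ref{t 2.1} survives the substitution $q = m$ and $n \mapsto n+m$, which it trivially does.
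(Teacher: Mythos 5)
Your proof is correct and matches the paper's argument exactly: the paper likewise obtains the corollary by applying Theorem \ref{t 2.1} with $q=m$ and $n$ replaced by $n+m$, the reindexing $j=i-m$ being the implicit notational step. Your additional check that $n+m\geq m$ holds is a harmless elaboration the paper omits.
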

\begin{proof}
Applying Theorem \ref{t 2.1} for $q=m$ and replacing $n$ by $n+m$, we obtain the corollary. 
\end{proof}
\subsection{Repeated binomial sum}
Using Theorem \ref{t 2.1}, we develop a formula for the repeated sum of binomial coefficients. 
\begin{theorem} \label{t 2.2}
For any $m \in \mathbb{N^*}$ and for any $k,q,n \in \mathbb{N}$ such that $n \geq q$, we have 
\begin{equation*}
\begin{split}
\sum_{N_m=q}^{n}{\cdots \sum_{N_1=q}^{N_2}{\binom{N_1}{k}}}
&=\binom{n+m}{k+m}
-\sum_{j=1}^{m}{\binom{(q-1)+j}{k+j}\binom{(n-q)+(m-j)}{m-j}} \\
&=\binom{n+m}{k+m}
-\sum_{j=1}^{m}{\binom{(q-1)+j}{(q-1)-k}\binom{(n-q)+(m-j)}{n-q}} 
.\end{split}
\end{equation*}
\end{theorem}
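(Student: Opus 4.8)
The plan is to prove the first equality by induction on the number $m$ of nested summations, using Theorem \ref{t 2.1} as the engine at each stage, and then to deduce the second equality from the reflection symmetry $\binom{a}{b}=\binom{a}{a-b}$. For the base case $m=1$ the claimed identity reduces to $\sum_{N_1=q}^{n}\binom{N_1}{k}=\binom{n+1}{k+1}-\binom{q}{k+1}$, which is precisely Theorem \ref{t 2.1}; indeed, the single term $j=1$ contributes $\binom{q}{k+1}\binom{n-q}{0}=\binom{q}{k+1}$, so the formulas agree.

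For the inductive step I would peel off the outermost summation and write the $(m+1)$-fold sum as $\sum_{N_{m+1}=q}^{n} S(N_{m+1})$, where $S(N_{m+1})$ denotes the inner $m$-fold sum with top limit $N_{m+1}$. Applying the induction hypothesis to $S(N_{m+1})$ (with $n$ replaced by $N_{m+1}$) and then interchanging the finite sums, the problem splits into evaluating two elementary sums over $N_{m+1}$: first $\sum_{N_{m+1}=q}^{n}\binom{N_{m+1}+m}{k+m}$, and second $\sum_{N_{m+1}=q}^{n}\binom{(N_{m+1}-q)+(m-j)}{m-j}$ for each $j$. After the shift $i=N_{m+1}+m$, the first sum is handled directly by Theorem \ref{t 2.1} and yields $\binom{n+m+1}{k+m+1}-\binom{q+m}{k+m+1}$. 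After the shift $i=N_{m+1}-q$, the second sum is a shifted binomial sum covered by Corollary \ref{c 2.2}; since its boundary term $\binom{m-j}{m-j+1}$ vanishes, it collapses to $\binom{(n-q)+(m+1-j)}{m+1-j}$.

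The observation that closes the induction is that the leftover term $-\binom{q+m}{k+m+1}$ produced by the first sum is exactly the $j=m+1$ term of the target formula at level $m+1$, namely $\binom{(q-1)+(m+1)}{k+(m+1)}\binom{(n-q)+0}{0}$. Recombining, the two pieces assemble into $\binom{n+m+1}{k+m+1}-\sum_{j=1}^{m+1}\binom{(q-1)+j}{k+j}\binom{(n-q)+(m+1-j)}{m+1-j}$, which is the desired statement for $m+1$. The main bookkeeping obstacle is tracking the three index shifts correctly and recognizing this boundary-term-to-new-summand identification; once that is seen, the remaining algebra is routine.

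Finally, I would obtain the second form of the identity termwise from the reflection symmetry of binomial coefficients: since $(k+j)+\big((q-1)-k\big)=(q-1)+j$ we have $\binom{(q-1)+j}{k+j}=\binom{(q-1)+j}{(q-1)-k}$, and since $(m-j)+(n-q)=(n-q)+(m-j)$ we have $\binom{(n-q)+(m-j)}{m-j}=\binom{(n-q)+(m-j)}{n-q}$. Substituting both into the first form gives the second, completing the proof.
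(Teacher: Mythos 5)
Your proof is correct and follows essentially the same route as the paper: induction on $m$ with Theorem \ref{t 2.1} applied after interchanging sums, identification of the leftover boundary term $-\binom{q+m}{k+m+1}$ with the $j=m+1$ summand, and the vanishing of $\binom{m-j}{m-j+1}$. You additionally make explicit the reflection argument for the second equality, which the paper leaves implicit, but the substance is the same.
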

\begin{proof}
1. Base case: verify true for $m=1$. 
$$
\binom{n+1}{k+1}
-\sum_{j=1}^{1}{\binom{(q-1)+j}{k+j}\binom{(n-q)+(1-j)}{1-j}}
=\binom{n+1}{k+1}
-\binom{q}{k+1}
.$$
From Theorem \ref{t 2.1}, this case is proven. \\
2. Induction hypothesis: assume the statement is true until $m$. 
$$
\sum_{N_m=q}^{n}{\cdots \sum_{N_1=q}^{N_2}{\binom{N_1}{k}}}
=\binom{n+m}{k+m}
-\sum_{j=1}^{m}{\binom{(q-1)+j}{k+j}\binom{(n-q)+(m-j)}{m-j}}
.$$
3. Induction step: we will show that this statement is true for $(m+1)$. \\
We have to show the following statement to be true:   
$$
\sum_{N_{m+1}=q}^{n}{\cdots \sum_{N_1=q}^{N_2}{\binom{N_1}{k}}}
=\binom{n+m+1}{k+m+1}
-\sum_{j=1}^{m+1}{\binom{(q-1)+j}{k+j}\binom{(n-q)+(m+1-j)}{m+1-j}}
.$$
$$ \\ $$ 
\begin{equation*}
\begin{split}
\sum_{N_{m+1}=q}^{n}{\cdots \sum_{N_1=q}^{N_2}{\binom{N_1}{k}}}
&=\sum_{N_{m+1}=q}^{n}{\left(\sum_{N_{m}=q}^{N_{m+1}}{\cdots \sum_{N_1=q}^{N_2}{\binom{N_1}{k}}}\right)} \\
&\resizebox{\linewidth-108pt}{!}{$\displaystyle{=\sum_{N_{m+1}=q}^{n}{\binom{N_{m+1}+m}{k+m}}
-\sum_{N_{m+1}=q}^{n}{\sum_{j=1}^{m}{\binom{(q-1)+j}{k+j}\binom{(N_{m+1}-q)+(m-j)}{m-j}}} }$} \\
&\resizebox{\linewidth-108pt}{!}{$\displaystyle{=\sum_{N_{m+1}=q}^{n}{\binom{N_{m+1}+m}{k+m}}
-\sum_{j=1}^{m}{\sum_{N_{m+1}=q}^{n}{\binom{(q-1)+j}{k+j}\binom{(N_{m+1}-q)+(m-j)}{m-j}}} }$}\\
&=\sum_{N_{m+1}=q+m}^{n+m}{\binom{N_{m+1}}{k+m}}
-\sum_{j=1}^{m}{\binom{(q-1)+j}{k+j}\sum_{N_{m+1}=m-j}^{(n-q)+(m-j)}{\binom{N_{m+1}}{m-j}}}.
\end{split}
\end{equation*}
By applying Theorem \ref{t 2.1}, 
\begin{equation*}
\begin{split}
\sum_{N_{m+1}=q}^{n}{\cdots \sum_{N_1=q}^{N_2}{\binom{N_1}{k}}}
=&\binom{n+m+1}{k+m+1}-\binom{q+m}{k+m+1} \\ 
&-\sum_{j=1}^{m}{\binom{(q-1)+j}{k+j}\binom{(n-q)+(m+1-j)}{m+1-j}} 
.\end{split}
\end{equation*}
Noticing that 
\begin{equation*}
\sum_{j=m+1}^{m+1}{\binom{(q-1)+j}{k+j}\binom{(n-q)+(m+1-j)}{m+1-j}}
=\binom{q+m}{k+m+1},
\end{equation*}
hence, the case for $(n+1)$ is proven and the theorem is proven by induction.  
\end{proof}
An important particular case of Theorem \ref{t 2.2} is illustrated by the following corollary. In fact, the formula for the repeated sum of binomial coefficients is heavily simplified if the sums are started at $0$. 
\begin{corollary} \label{c 2.3}
For any $m \in \mathbb{N^*}$ and for any $k,n \in \mathbb{N}$, we have that 
$$
\sum_{N_{m}=0}^{n}{\cdots \sum_{N_1=0}^{N_2}{\binom{N_1}{k}}}
=\binom{n+m}{k+m}
.$$
\end{corollary}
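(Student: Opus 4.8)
The plan is to obtain this corollary as the special case $q=0$ of Theorem \ref{t 2.2}, since the hypothesis here is precisely that every sum starts at zero. First I would substitute $q=0$ into the first closed form of Theorem \ref{t 2.2}, which yields
$$
\sum_{N_{m}=0}^{n}{\cdots \sum_{N_1=0}^{N_2}{\binom{N_1}{k}}}
=\binom{n+m}{k+m}
-\sum_{j=1}^{m}{\binom{j-1}{k+j}\binom{n+(m-j)}{m-j}}.
$$
It then remains only to show that the entire correction sum vanishes, after which the claimed identity drops out immediately.

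Second, I would argue term by term that $\binom{j-1}{k+j}=0$ for every $j$ with $1 \le j \le m$. Indeed, the upper index $j-1$ is strictly smaller than the lower index $k+j$ whenever $k \ge 0$, because $\left(k+j\right)-\left(j-1\right)=k+1>0$. By the convention recorded in the remark following Theorem \ref{t 2.1} (a binomial coefficient with upper index smaller than its lower index is taken to be zero), each such factor is zero, so the whole sum $\sum_{j=1}^{m}{\binom{j-1}{k+j}\binom{n+(m-j)}{m-j}}$ collapses, leaving exactly $\binom{n+m}{k+m}$. The same conclusion follows just as quickly from the second form of Theorem \ref{t 2.2}, where the relevant factor becomes $\binom{j-1}{-1-k}$ with a negative lower index.

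The only point demanding any care — and the closest thing to an obstacle — is confirming that this vanishing is uniform over the full range $1 \le j \le m$ and survives the boundary value $k=0$; but this reduces to the single inequality $k+1>0$, so no genuine difficulty arises.

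Alternatively, if one prefers a self-contained argument that does not invoke the full strength of Theorem \ref{t 2.2}, I would prove the statement directly by induction on $m$. The base case $m=1$ is exactly Corollary \ref{c 2.1}. For the inductive step I would peel off the outermost summation, apply the inductive hypothesis to the inner $m$-fold sum, and then invoke the shifted binomial sum of Corollary \ref{c 2.2} with $k$ replaced by $k+m$, namely $\sum_{i=0}^{n}{\binom{i+m}{k+m}}=\binom{n+m+1}{k+m+1}-\binom{m}{k+m+1}$, in which the subtracted term $\binom{m}{k+m+1}$ again vanishes since $m<k+m+1$. Either route is short; I would favour the specialization of Theorem \ref{t 2.2}, as it makes the structure of the general formula transparent.
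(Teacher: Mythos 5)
Your proposal is correct and follows exactly the paper's own route: specialize Theorem \ref{t 2.2} to $q=0$ and observe that each factor $\binom{j-1}{k+j}$ vanishes because its upper index is smaller than its lower index (as $k+1>0$). The alternative induction you sketch is also sound, but the primary argument coincides with the paper's proof.
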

\begin{proof}
From Theorem \ref{t 2.2} for $q=0$, 
$$
\sum_{N_{m}=0}^{n}{\cdots \sum_{N_1=0}^{N_2}{\binom{N_1}{k}}}
=\binom{n+m}{k+m}
-\sum_{j=1}^{m}{\binom{j-1}{k+j}\binom{n+(m-j)}{m-j}}
.$$
We can notice that because $k \geq 0$, then $k+j \geq j>j-1$, thus, $\forall j \in \mathbb{N}$, $\binom{j-1}{k+j}=0$. Hence, we obtain the corollary. 
\end{proof}
The formula for repeated sums of ones, which is needed to prove the new definition for binomial coefficients, is another crucial particular case of Theorem \ref{t 2.2}. 
\begin{corollary} \label{c 2.4}
For any $m \in \mathbb{N^*}$ and for any $q,n \in \mathbb{N}$ such that $n \geq q$, we have that 
$$
\sum_{N_{m}=q}^{n}{\cdots \sum_{N_1=q}^{N_2}{1}}
=\binom{n-q+m}{m}
.$$
\end{corollary}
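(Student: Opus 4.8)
The plan is to reduce this to the already-established $q=0$ case, namely Corollary \ref{c 2.3}, using two simple observations: first that the constant summand $1$ is itself a binomial coefficient, and second that shifting every summation index by $q$ turns the lower limits $q$ into $0$ without disturbing the summand.

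First I would write $1 = \binom{N_1}{0}$, so that the left-hand side is exactly the repeated binomial sum of Corollary \ref{c 2.3} specialized to $k=0$, except that the sums start at $q$ rather than at $0$. To handle the nonzero starting point, I would perform the substitution $M_i = N_i - q$ in each of the $m$ nested sums. The innermost sum runs over $q \leq N_1 \leq N_2$, which becomes $0 \leq M_1 \leq M_2$ with $M_2 = N_2 - q$; propagating outward, each level's upper limit $N_{i+1}$ becomes $M_{i+1}$, and the outermost limit $n$ becomes $n-q$. Since the summand is the constant $1 = \binom{M_1}{0}$, it is untouched by the shift. Hence
\[
\sum_{N_{m}=q}^{n}{\cdots \sum_{N_1=q}^{N_2}{1}}
= \sum_{M_{m}=0}^{n-q}{\cdots \sum_{M_1=0}^{M_2}{\binom{M_1}{0}}}.
\]
Finally I would invoke Corollary \ref{c 2.3} with $k=0$ and with $n$ replaced by $n-q$, which immediately yields $\binom{(n-q)+m}{0+m} = \binom{n-q+m}{m}$, as required.

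The only step requiring care is verifying that the index shift propagates consistently through all $m$ nested summations, that is, that each intermediate upper bound $N_{i+1}$ transforms to $M_{i+1}$ so that the reindexed expression is genuinely the $q=0$ repeated sum over the range $[0,\,n-q]$. This is routine but is the crux, since it is precisely what lets the general-$q$ statement collapse onto the clean $q=0$ formula. Without it, one would instead have to apply the full Theorem \ref{t 2.2} at $k=0$ and then simplify the correction term $\sum_{j=1}^{m}\binom{q-1+j}{j}\binom{n-q+m-j}{m-j}$ back down to $\binom{n-q+m}{m}$, which would require a Vandermonde-type convolution identity and is considerably more work than the shift argument.

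As a sanity check I would also note the combinatorial reading: the left-hand side counts weakly increasing $m$-tuples $q \leq N_1 \leq \cdots \leq N_m \leq n$, i.e.\ multisets of size $m$ drawn from the $n-q+1$ values $\{q,\dots,n\}$, of which there are exactly $\binom{(n-q+1)+m-1}{m} = \binom{n-q+m}{m}$. This independently confirms the target formula and reassures that the shift-and-reduce approach lands on the correct answer.
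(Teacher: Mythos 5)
Your proof is correct and follows essentially the same route as the paper's own proof: the paper likewise shifts each summation index by $q$ level by level to reduce the sum to $\sum_{N_{m}=0}^{n-q}\cdots\sum_{N_1=0}^{N_2}1$ and then invokes Corollary \ref{c 2.3} with $k=0$. Your additional combinatorial sanity check is a nice touch but not part of the paper's argument.
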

\begin{proof}
$$
\sum_{N_{m}=q}^{n}{\cdots \sum_{N_1=q}^{N_2}{1}}
=\sum_{N_{m}=q}^{n}{\cdots \sum_{N_2=q}^{N_3}{\sum_{N_1=0}^{N_2-q}{1}}}
=\sum_{N_{m}=q}^{n}{\cdots \sum_{N_2=0}^{N_3-q}{\sum_{N_1=0}^{N_2}{1}}}
=\cdots
=\sum_{N_{m}=0}^{n-q}{\cdots \sum_{N_2=0}^{N_3}{\sum_{N_1=0}^{N_2}{1}}}
.$$
From Corollary \ref{c 2.3} for $k=0$, we obtain the corollary.  
\end{proof}
The sum of a special product of binomial coefficients can also be derived from Theorem \ref{t 2.2}. 
\begin{corollary} \label{c 2.5}
For any $m \in \mathbb{N^*}$ and for any $q,n \in \mathbb{N}$ such that $n \geq q$, we have that 
\begin{equation*}
\begin{split}
\sum_{j=1}^{m}{\binom{(q-1)+j}{j}\binom{(n-q)+(m-j)}{m-j}}
&=\sum_{j=1}^{m}{\binom{(q-1)+j}{q-1}\binom{(n-q)+(m-j)}{n-q}} \\
&=\binom{n+m}{m}-\binom{n-q+m}{m}
.\end{split}
\end{equation*}
\end{corollary}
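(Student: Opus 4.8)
The plan is to obtain this identity as an immediate consequence of Theorem \ref{t 2.2} specialized to $k=0$, combined with the closed form for repeated sums of ones supplied by Corollary \ref{c 2.4}.

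First I would set $k=0$ in Theorem \ref{t 2.2}. Since $\binom{N_1}{0}=1$, the left-hand side collapses to the repeated sum of ones $\sum_{N_m=q}^{n}\cdots\sum_{N_1=q}^{N_2}1$, which by Corollary \ref{c 2.4} equals $\binom{n-q+m}{m}$. On the right-hand side, the leading term $\binom{n+m}{k+m}$ becomes $\binom{n+m}{m}$, and the correction sum becomes exactly $\sum_{j=1}^{m}\binom{(q-1)+j}{j}\binom{(n-q)+(m-j)}{m-j}$. Equating these two evaluations and rearranging immediately gives
$$
\sum_{j=1}^{m}\binom{(q-1)+j}{j}\binom{(n-q)+(m-j)}{m-j}=\binom{n+m}{m}-\binom{n-q+m}{m},
$$
which is precisely the asserted value of the first sum.

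It then remains only to identify the first and second sums term by term, and this is pure binomial symmetry $\binom{a}{b}=\binom{a}{a-b}$. For the first factor, $(q-1)+j-j=q-1$ yields $\binom{(q-1)+j}{j}=\binom{(q-1)+j}{q-1}$; for the second factor, $(n-q)+(m-j)-(m-j)=n-q$ yields $\binom{(n-q)+(m-j)}{m-j}=\binom{(n-q)+(m-j)}{n-q}$. Substituting both into each summand transforms the first sum into the second, completing the chain of equalities.

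I do not anticipate any genuine obstacle, as the statement is essentially a bookkeeping corollary of results already in hand. The single point deserving care is verifying that the $k=0$ substitution and the summation index $j$ line up exactly with the summand as written, so that the rearrangement introduces no boundary term; once this alignment is checked, the argument is complete.
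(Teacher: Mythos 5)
Your proposal is correct and follows exactly the paper's own route: specialize Theorem \ref{t 2.2} to $k=0$, evaluate the resulting repeated sum of ones via Corollary \ref{c 2.4}, and rearrange, with the second form of the sum following from the symmetry $\binom{a}{b}=\binom{a}{a-b}$. No substantive difference from the paper's proof.
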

\begin{proof}
By applying Theorem \ref{t 2.2} for $k=0$, 
\begin{equation*}
\begin{split}
\sum_{N_m=q}^{n}{\cdots \sum_{N_1=q}^{N_2}{1}}
&=\binom{n+m}{m}
-\sum_{j=1}^{m}{\binom{(q-1)+j}{j}\binom{(n-q)+(m-j)}{m-j}} \\
&=\binom{n+m}{m}
-\sum_{j=1}^{m}{\binom{(q-1)+j}{q-1}\binom{(n-q)+(m-j)}{n-q}} 
.\end{split}
\end{equation*}
By comparing with the expression given by Corollary \ref{c 2.4}, we get this corollary. 
\end{proof}
\section{Binomial coefficients as repeated sums and repeated sequences} \label{Binomial Coefficients as Repeated Sums}
In this section, we use the formulas presented in the previous section in order to introduce a new definition of binomial coefficients. First, we prove that a binomial coefficient can be defined as a repeated sum of `1's. Second, we prove that a binomial coefficient can be defined as a repeated sequence. 
\subsection{Binomial coefficients as repeated sums of ones}
In this section, we prove that binomial coefficients can be defined as repeated sums of ones. \\ 
A repeated sum of `1's can be expressed as a binomial coefficient as is shown by Theorem \ref{t 3.1}. 
\begin{theorem} \label{t 3.1}
A repeated sum of 1s corresponds to the following binomial coefficient,  
$$
\sum_{N_m=1}^{n}{\cdots \sum_{N_1=1}^{N_2}{1}}
=\binom{n+m-1}{m}
.$$
$$
\sum_{N_m=0}^{n}{\cdots \sum_{N_1=0}^{N_2}{1}}
=\binom{n+m}{m}
.$$
\end{theorem}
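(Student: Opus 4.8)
The plan is to obtain both identities as direct specializations of Corollary \ref{c 2.4}, which already establishes the general repeated sum of ones $\sum_{N_{m}=q}^{n}\cdots\sum_{N_1=q}^{N_2}1 = \binom{n-q+m}{m}$ for an arbitrary common lower bound $q$. Since that corollary absorbs the genuine work (it descends from the induction carried out for Theorem \ref{t 2.2}), the remaining task reduces to substituting the two relevant values of $q$ and simplifying the resulting binomial coefficient.

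First I would prove the lower-bound-zero identity $\sum_{N_m=0}^{n}\cdots\sum_{N_1=0}^{N_2}1 = \binom{n+m}{m}$ by setting $q=0$ in Corollary \ref{c 2.4}, which gives $\binom{n-0+m}{m}=\binom{n+m}{m}$ immediately. Alternatively, this case is exactly Corollary \ref{c 2.3} specialized to $k=0$, using $\binom{N_1}{0}=1$; I would record this as a cross-check. Then I would prove the lower-bound-one identity $\sum_{N_m=1}^{n}\cdots\sum_{N_1=1}^{N_2}1 = \binom{n+m-1}{m}$ by setting $q=1$, which yields $\binom{n-1+m}{m}=\binom{n+m-1}{m}$.

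The only thing left to verify is the elementary arithmetic inside the binomial coefficient ($n-q+m = n+m$ when $q=0$ and $n-q+m=n+m-1$ when $q=1$), so there is essentially no obstacle and nothing I expect to be hard. Had Corollary \ref{c 2.4} not been available, I would instead argue by induction on $m$: the base case $m=1$ is the plain count $\sum_{N_1=q}^{n}1 = n-q+1 = \binom{n-q+1}{1}$, and the inductive step would fold the outermost sum using Theorem \ref{t 2.1} together with Pascal's identity, shifting indices so that the inner repeated sum matches the hypothesis. But since the earlier corollary is in hand, this self-contained route is unnecessary, and the specialization argument is the cleanest path.
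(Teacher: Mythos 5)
Your proposal is correct and matches the paper's own proof exactly: both identities are obtained by specializing Corollary \ref{c 2.4} to $q=1$ and $q=0$ respectively. The extra cross-check via Corollary \ref{c 2.3} and the fallback induction are unnecessary but harmless.
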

\begin{proof}
The first equality of this theorem is obtained by applying Corollary \ref{c 2.4} for $q=1$ and the second equality is obtained by applying Corollary \ref{c 2.4} for $q=0$. 
\end{proof}
\begin{remark}
{\em We can highlight the combinatorical nature of repeated sums of 1s as $\binom{n-1+m}{m}$ represents the number of ways of choosing $m$ elements from $n$ with replacement but without regard to order  (\cite{olofsson2012probability} pp. 24-25).}
\end{remark}
A binomial coefficient can be defined as a repeated sum of 1s as shown by Theorem \ref{t 3.3}. 
\begin{theorem} \label{t 3.3}
A binomial coefficient corresponds to the following repeated sums of 1s, 
$$
\binom{n}{k}
=\sum_{N_k=1}^{n-k+1}{\cdots \sum_{N_1=1}^{N_2}{1}}
=\sum_{N_k=0}^{n-k}{\cdots \sum_{N_1=0}^{N_2}{1}}
.$$
\end{theorem}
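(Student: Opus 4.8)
The plan is to recognize this statement as a direct specialization of Theorem \ref{t 3.1}, where the number of nested summations is $k$ rather than a generic $m$, and where the upper limit of the outermost sum has been shifted so that the top entry of the resulting binomial coefficient collapses to $n$. First I would set $m = k$ throughout Theorem \ref{t 3.1}, so that both of its displayed equalities become statements about $k$-fold repeated sums of ones, and then substitute the appropriate upper bound for each of the two forms.

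For the first equality, I would apply the first formula of Theorem \ref{t 3.1} with $m$ replaced by $k$ and the upper bound $n$ replaced by $n-k+1$, which yields
$$
\sum_{N_k=1}^{n-k+1}{\cdots \sum_{N_1=1}^{N_2}{1}}
=\binom{(n-k+1)+k-1}{k}
=\binom{n}{k},
$$
since the $+1$ and $-1$ shifts cancel in the upper argument. For the second equality, I would instead apply the second formula of Theorem \ref{t 3.1} with $m$ replaced by $k$ and $n$ replaced by $n-k$, obtaining
$$
\sum_{N_k=0}^{n-k}{\cdots \sum_{N_1=0}^{N_2}{1}}
=\binom{(n-k)+k}{k}
=\binom{n}{k}.
$$
Taken together, these two computations establish the chain of equalities in the statement.

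The only point that requires any care is the bookkeeping of the substitution: one must check that the outer summation index $N_k$ and the chosen upper limit are consistent with the convention used in Theorem \ref{t 3.1}, and that the top argument of the binomial coefficient simplifies exactly to $n$ in each case. I do not expect a genuine obstacle here, since the entire content of the result lies in Theorem \ref{t 3.1} (which itself rests on Corollary \ref{c 2.4}), and what remains is essentially a relabelling of indices. Indeed, one could bypass Theorem \ref{t 3.1} and derive both equalities in a single stroke directly from Corollary \ref{c 2.4}, taking $m = k$ together with $(q,n) = (1,\,n-k+1)$ for the first form and $(q,n) = (0,\,n-k)$ for the second.
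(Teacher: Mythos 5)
Your proposal is correct and matches the paper's own proof essentially verbatim: the paper likewise obtains the first equality by substituting $n \mapsto n-k+1$ (with $m=k$) into the first formula of Theorem \ref{t 3.1}, and the second by substituting $n \mapsto n-k$ into the second formula. Your closing observation that both could be read off directly from Corollary \ref{c 2.4} is also consistent with the paper, since Theorem \ref{t 3.1} is itself just that corollary at $q=1$ and $q=0$.
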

\begin{proof}
The first part of this theorem is obtained by replacing $n$ by $n-k+1$ in the first equation of Theorem \ref{t 3.1}. The second part is obtained by replacing $n$ by $n-k$ in the second equation of Theorem \ref{t 3.1}.
\end{proof}
\subsection{Binomial coefficients as repeated sequences} 
In this section, we will go through a series of lemmas in order to prove that any binomial coefficient can be defined as a repeated sequence (defined below). In order to do so, we will first prove that repeated sequences can be explicitly expressed as a repeated sum of ones. Then, using the fact that any repeated sum of ones can be defined as a binomial coefficient (as proven in the previous section), we define these repeated sequences as binomial coefficients. 
\subsubsection{Notation and definitions}
\begin{definition}
We define the first difference of a sequence $(x_n)$, denoted by $\Delta x_n$, as the difference between two consecutive terms of the sequence $(x_n)$:  
$$
\Delta x_{n} = x_{n} - x_{n-1}
.$$
\end{definition}
\begin{definition}
Let $m \in \mathbb{N}$. We define the $m$-th difference of a sequence $(x_n)$, denoted by $\Delta^m x_n$, by the following recurrent relation:  
$$
\begin{cases}
\Delta^{m} x_{n} = \Delta^{m-1} x_{n} - \Delta^{m-1} x_{n-1} \\
\forall i \in \mathbb{Z}, \Delta^{0} x_{i} = x_{i}.
\end{cases}
$$
\end{definition}
\begin{definition}
We define a repeated sequence of degree $m$, denoted by $\{x_{n,m}\}_{n=1}^{\infty}$, as a sequence whose $m$-th difference is a constant $(\forall i,j \in \mathbb{N}^*, \,\, \Delta^m x_{i,m}=\Delta^m x_{j,m})$.
 $x_{n,m}$ is the $n$-th term of this repeated sequence of order $m$. 
\end{definition}
\begin{remark}
{\em In order to keep the $m$-th difference always valid we define an extension of the sequence such that $\forall i \leq 0, x_{i,m}=0$. However, the properties of the sequence do not extend to this extension. Note that the theorems develop in this section will only apply to repeated sequences which  validate the condition imposed by this extension.}
\end{remark}
\begin{remark}
{\em For a given order $m$ and a given $m$-th difference, the repeated sequence is unique. The repeated sequence of order $m$ having $\Delta^m x_{n,m}=\delta \in \mathbb{C}$ is denoted by $\{x_{n,m}^{(\delta)}\}_{n=1}^{\infty}$. 
}
\end{remark}
\begin{definition}
We define the unity repeated sequence, denoted by $\{\hat{x}_{n,m}\}_{n=1}^{\infty}$, as the sequence whose $m$-th difference is 1 $(\forall i \in \mathbb{N}^*, \,\, \Delta^m \hat{x}_{i,m}=1)$.  
$\hat{x}_{n,m}$ is the $n$-th term of the unity repeated sequence of order $m$. 
\end{definition}
\begin{example}
{\em The unity repeated sequence of order $0$, $(\hat{x}_{n,0})$, is the following: }
$$
\hat{x}_{1,0}=1, \,\, \hat{x}_{2,0}=1, \,\, \hat{x}_{3,0}=1, \cdots
$$
\end{example}
\begin{example}
{\em The unity repeated sequence of order $1$, $(\hat{x}_{n,1})$, is the following: }
$$
\hat{x}_{1,1}=1, \,\, \hat{x}_{2,1}=2(=1+1), \,\, \hat{x}_{3,1}=3(=1+1+1), \cdots
$$
\end{example}
\begin{example}
{\em The unity repeated sequence of order $2$, $(\hat{x}_{n,2})$, is the following: }
$$
\hat{x}_{1,2}=1, \,\, \hat{x}_{2,2}=3(=1+(1+1)), \,\, \hat{x}_{3,2}=6(=1+(1+1)+(1+1+1)), \cdots
$$
\end{example}
\subsubsection{Expressions for the m-th difference of a sequence}
In this section, using the recurrent definition of the $m$-th difference, we develop an additional definition for the $m$-th difference of a sequence in terms of a lower order difference of this sequence. 
\begin{theorem} \label{explicitVariation1}
For any $m,n,p \in \mathbb{N}$ such that $p \leq m$ and for any sequence $x_n$, we have that 
$$\Delta^{m} x_{n} 
= \sum_{j=0}^{p}{(-1)^{j}\binom{p}{j} \Delta ^{m-p} x_{n-j}}.$$
\end{theorem}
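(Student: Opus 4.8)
The plan is to prove the identity by induction on $p$, holding $m$ and the sequence $(x_n)$ fixed but arbitrary, and maintaining throughout the constraint $p \leq m$ so that the lower-order difference $\Delta^{m-p}$ appearing on the right-hand side is well defined.

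For the base case $p=0$, the right-hand side collapses to a single summand: since $\binom{0}{0}=1$ and the factor $(-1)^{0}=1$, the sum equals $\Delta^{m-0} x_{n-0} = \Delta^{m} x_{n}$, so the identity holds trivially. (Equivalently, one may take $p=1$ as the base case, where the right-hand side is $\Delta^{m-1} x_{n} - \Delta^{m-1} x_{n-1}$, which is precisely the recurrent definition of $\Delta^{m} x_{n}$.)

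For the induction step, I would assume the formula holds for some $p$ with $p < m$ and establish it for $p+1$; the hypothesis $p+1 \leq m$ guarantees that $\Delta^{m-p-1}$ remains valid. Starting from the induction hypothesis, I would apply the recurrent definition of the difference operator to each summand, writing
$$
\Delta^{m-p} x_{n-j} = \Delta^{m-p-1} x_{n-j} - \Delta^{m-p-1} x_{n-j-1},
$$
which is legitimate because $m-p \geq 1$. Substituting and splitting yields two sums over $\Delta^{m-p-1}$. I would then reindex the second sum by the shift $j \mapsto j-1$, so that both sums run over the terms $\Delta^{m-p-1} x_{n-j}$, and merge them by collecting the coefficient of each $\Delta^{m-p-1} x_{n-j}$.

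The coefficient combination is where the real work lies: for the interior indices $1 \leq j \leq p$ the merged coefficient is $(-1)^{j}\left(\binom{p}{j}+\binom{p}{j-1}\right) = (-1)^{j}\binom{p+1}{j}$ by Pascal's triangle identity, while the two boundary terms $j=0$ and $j=p+1$ must be handled separately, using $\binom{p}{0}=\binom{p+1}{0}=1$ and $\binom{p}{p}=\binom{p+1}{p+1}=1$ respectively. This reassembles exactly into $\sum_{j=0}^{p+1}(-1)^{j}\binom{p+1}{j}\Delta^{m-p-1} x_{n-j}$, completing the induction. The main obstacle is thus purely bookkeeping: aligning the two sums after the index shift and correctly accounting for the endpoints when invoking Pascal's identity, rather than any conceptual difficulty.
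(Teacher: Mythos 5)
Your proof is correct, but it runs the induction in the opposite direction from the paper. The paper fixes $p$ and inducts on $m$: it writes $\Delta^{m+1}x_n = \Delta^{m}x_n - \Delta^{m}x_{n-1}$, applies the induction hypothesis to each of the two outer terms, and then the factor $\Delta^{m-p}x_{n-j} - \Delta^{m-p}x_{n-j-1}$ recombines into $\Delta^{m-p+1}x_{n-j}$ by the recurrent definition — the binomial coefficients $\binom{p}{j}$ pass through untouched and Pascal's identity is never needed. You instead fix $m$ and induct on $p$, peeling one difference off each summand from the inside, reindexing, and merging the two shifted sums via $\binom{p}{j}+\binom{p}{j-1}=\binom{p+1}{j}$ with separate treatment of the endpoints $j=0$ and $j=p+1$. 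Your route costs more bookkeeping but buys something real: at each step of your induction the full statement for the new value of $p$ is established for the given $m$, so all cases $p\le m$ are covered cleanly, whereas the paper's induction on $m$ only propagates the statement for $p\le m$ into stage $m+1$ and leaves the boundary case $p=m+1$ implicitly to the reader. Both arguments are sound; yours is the classical proof of the finite-difference binomial expansion, the paper's is the more economical one.
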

\begin{proof} 
1. Base case: verify true for $m=0$. \\ 
Knowing that $0 \leq p \leq m$ and $m=0$, hence, $p=0$. 
$$
\sum_{j=0}^{0}{(-1)^{j}\binom{0}{j} \Delta^{0} x_{n-j}}=\Delta^{0} x_{n}.
$$
2. Induction hypothesis: assume the statement is true until $m$. 
$$\Delta^{m} x_{n} 
= \sum_{j=0}^{p}{(-1)^{j}\binom{p}{j} \Delta ^{m-p} x_{n-j}}.$$ 
3. Induction step: we will show that this statement is true for $(m+1)$. \\
We have to show the following statement to be true:   
$$\Delta^{m+1} x_{n} 
= \sum_{j=0}^{p}{(-1)^{j}\binom{p}{j} \Delta^{m-p+1} x_{n-j}}.$$
$$ \\ $$
Knowing that $\Delta^{m+1}x_n=\Delta^{m}x_n-\Delta^{m}x_{n-1}$ and applying the induction hypothesis, we get 
\begin{equation*}
\begin{split}
\Delta^{m+1}x_n
&=\sum_{j=0}^{p}{(-1)^{j}\binom{p}{j} \Delta ^{m-p} x_{n-j}}
-\sum_{j=0}^{p}{(-1)^{j}\binom{p}{j} \Delta ^{m-p} x_{n-j-1}}\\
&=\sum_{j=0}^{p}{(-1)^{j}\binom{p}{j} (\Delta ^{m-p} x_{n-j}-\Delta ^{m-p} x_{n-j-1})} \\
&=\sum_{j=0}^{p}{(-1)^{j}\binom{p}{j} \Delta ^{m-p+1} x_{n-j}} 
.\end{split}
\end{equation*}
\end{proof}
\begin{remark}
{\em As we can see, the binomial coefficient $\binom{p}{j}$ can be defined as the coefficient of $(-1)^j\Delta^{m-p}x_{n-j}$ in the expression of the $m$-th difference of the sequence $(x_n)$.}
\end{remark}
In particular, the $m$-th difference of a sequence can be expressed in terms of its elements. 
\begin{corollary} \label{explicitVariation2}
For $p=m$, Theorem \ref{explicitVariation1} becomes 
$$\Delta^{m} x_{n} 
= \sum_{j=0}^{m}{(-1)^{j}\binom{m}{j}x_{n-j}}.
$$
\end{corollary}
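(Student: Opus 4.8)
The plan is to specialize Theorem \ref{explicitVariation1} to the boundary case $p = m$, which is admissible since the theorem only requires $p \leq m$ and here equality is permitted. First I would substitute $p = m$ directly into the general identity
$$
\Delta^{m} x_{n} = \sum_{j=0}^{p}{(-1)^{j}\binom{p}{j} \Delta^{m-p} x_{n-j}},
$$
so that every exponent $m - p$ collapses to $0$, giving
$$
\Delta^{m} x_{n} = \sum_{j=0}^{m}{(-1)^{j}\binom{m}{j} \Delta^{0} x_{n-j}}.
$$
The only remaining step is to invoke the defining property of the zeroth difference, namely $\Delta^{0} x_{i} = x_{i}$ for all $i$, which replaces each $\Delta^{0} x_{n-j}$ by $x_{n-j}$ and produces the stated formula.

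Since this is a pure substitution, there is no genuine obstacle to surmount: the entire combinatorial content already lives in Theorem \ref{explicitVariation1}, and setting $p = m$ simply pushes the lower-order differences down to order $0$, where the base case of the $m$-th difference recursion identifies them with the original terms of the sequence. The one point deserving a moment's attention is to confirm that the index $m - p = 0$ lands exactly on this base case, so that the reduction to the elements $x_{n-j}$ is legitimate rather than a case that must be argued separately. I note in passing that one could instead establish the identity from scratch by an independent induction on $m$ using $\Delta^{m+1} x_n = \Delta^{m} x_n - \Delta^{m} x_{n-1}$ together with Pascal's triangle identity, but the corollary route via the already-proved general theorem is by far the more economical, so that is the approach I would adopt.
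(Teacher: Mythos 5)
Your proof is correct and matches the paper's intent exactly: the corollary is obtained by setting $p=m$ in Theorem \ref{explicitVariation1} and applying the defining base case $\Delta^{0}x_{i}=x_{i}$ (the paper treats this as immediate and gives no separate proof).
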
 
\subsubsection{Explicit expression for a repeated sequence as a repeated sum of ones}
We are interested in the calculation of the sum of such sequences as well as in the definition of such sequences in terms of binomial coefficients. To do so, in this section, we convert the definition of a repeated sequence of degree $m$ which is in terms of the $m$-th difference into a definition in terms of a repeated sum of ones. Then, in the later sections, the theorems for repeated sums of ones are used to prove the proposed definition as well as to develop a formula for the repeated sum of a repeated sequence of degree $m$. \\

We start by proving the following set of lemmas.  
\begin{lemma} \label{l 3.1}
For repeated sequences of any order $k$, the difference of any order $m$ is zero for all terms with an index $i \leq 0$, 
$$
\forall k \in \mathbb{N}, \forall m \in \mathbb{N},\forall i \leq 0, \Delta^m x_{i,k}^{(\delta)}=0 
.$$
\end{lemma}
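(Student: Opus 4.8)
The plan is to prove the claim by induction on the order $k$ of the repeated sequence, using the recursive definition of the $m$-th difference together with the boundary extension $x_{i,k}^{(\delta)}=0$ for all $i\leq 0$. The statement to establish is that for every order $k$, every difference order $m$, and every index $i\leq 0$, we have $\Delta^m x_{i,k}^{(\delta)}=0$. The natural structuring principle is that a repeated sequence of order $k$ has a \emph{constant} $k$-th difference, and its lower-order differences $\Delta^{k-1}x_{n,k}^{(\delta)}$ themselves form a repeated sequence of order $1$ (with the same boundary convention), so one can peel off one order at a time.

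First I would dispose of the case $m=0$ directly: by the extension convention, $\Delta^0 x_{i,k}^{(\delta)}=x_{i,k}^{(\delta)}=0$ for all $i\leq 0$, for every order $k$. This anchors the induction on $m$ and handles the degenerate layer. Next, for the inductive step on $m$, I would use the defining recurrence $\Delta^m x_{i,k}^{(\delta)}=\Delta^{m-1}x_{i,k}^{(\delta)}-\Delta^{m-1}x_{i-1,k}^{(\delta)}$. If $i\leq 0$, then both $i\leq 0$ and $i-1\leq 0$, so by the induction hypothesis at level $m-1$ both terms on the right vanish, giving $\Delta^m x_{i,k}^{(\delta)}=0$. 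In fact this argument shows that a single induction on $m$ suffices and never even needs to invoke the order $k$ in an essential way, since the key observation is simply that $i\leq 0$ forces $i-1\leq 0$, keeping us inside the region where the extension is zero.

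Concretely I would proceed as follows. Fix $k\in\mathbb{N}$ arbitrary. I claim $\Delta^m x_{i,k}^{(\delta)}=0$ for all $m\in\mathbb{N}$ and all $i\leq 0$, and I prove this by induction on $m$. The base case $m=0$ is the extension convention as above. Assuming the result for $m-1$, the recurrence expresses $\Delta^m x_{i,k}^{(\delta)}$ as a difference of two $(m-1)$-th differences evaluated at indices $i$ and $i-1$, both of which are $\leq 0$ when $i\leq 0$; the induction hypothesis then kills both, completing the step. Since $k$ was arbitrary the result holds for all $k$.

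The main subtlety to watch for is not a computational obstacle but a definitional one: one must be careful that the recurrence $\Delta^m x_{i,k}^{(\delta)}=\Delta^{m-1}x_{i,k}^{(\delta)}-\Delta^{m-1}x_{i-1,k}^{(\delta)}$ is legitimately applicable for nonpositive indices $i$. This is exactly what the extension convention (``$\forall i\leq 0$, $x_{i,m}=0$'') is designed to guarantee, so the proof hinges on reading the recurrence as a \emph{definition} valid on all of $\mathbb{Z}$ rather than only on the positive indices where the sequence carries its intended meaning. Once that reading is accepted, the argument is a clean two-line induction, and no properties of the repeated sequence beyond the boundary condition are needed — which is consistent with the remark that the sequence's genuine properties are not assumed to extend to the nonpositive range.
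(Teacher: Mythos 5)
Your proof is correct and follows essentially the same route as the paper's: fix $k$ arbitrary, induct on $m$, use the extension convention for the base case $m=0$, and observe in the inductive step that $i\leq 0$ implies $i-1\leq 0$ so both terms of the defining recurrence vanish. (Your opening sentence announces an induction on $k$, but the argument you actually carry out is the single induction on $m$ with $k$ fixed, which is exactly what the paper does.)
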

\begin{proof}
1. Base case: verify true for $m=0$. \\
By definition, $x_{i,k}^{(\delta)}=0$ for $i \leq 0$. Hence, 
$$
\forall k \in \mathbb{N},\forall i \leq 0, 
\Delta^0 x_{i,k}^{(\delta)} = x_{i,k}^{(\delta)} = 0 
.$$
2. Induction hypothesis: assume the statement is true until $m$. 
$$
\forall k \in \mathbb{N},\forall i \leq 0, \Delta^m x_{i,k}^{(\delta)}=0 
.$$
3. Induction step: we will show that this statement is true for $(m+1)$. \\ 
We have to show the following statement to be true:  
$$
\forall k \in \mathbb{N},\forall i \leq 0, \Delta^{m+1} x_{i,k}^{(\delta)}=0 
.$$ 
By definition, 
$
\Delta^{m+1} x_{i,k}^{(\delta)} = \Delta^{m} x_{i,k}^{(\delta)} - \Delta^{m} x_{i-1,k}^{(\delta)} 
$. \\
From the induction hypothesis, we get that $\Delta^{m} x_{i,k}^{(\delta)}=0$ and $\Delta^{m} x_{i-1,k}^{(\delta)}=0$ because, respectively, $i \leq 0$ and $i-1 \leq -1 \leq 0$. \\
Hence, $\Delta^{m+1} x_{i,k}^{(\delta)}=0$.    
\end{proof}
\begin{lemma} \label{l 3.2}
For repeated sequences of any order $k$, the difference of any order $m$ of the first term of the sequence is equal to this initial term, 
$$
\forall k \in \mathbb{N},\forall m \in \mathbb{N}, \Delta^m x_{1,k}^{(\delta)}=x_{1,k}^{(\delta)}
.$$
\end{lemma}
\begin{proof}
1. Base case: verify true for $m=0$.  
$$
\forall k \in \mathbb{N}, \Delta^0 x_{1,k}^{(\delta)} =x_{1,k}^{(\delta)}.$$
2. Induction hypothesis: assume the statement is true until $m$. 
$$
\forall k \in \mathbb{N}, \Delta^m x_{1,k}^{(\delta)}=x_{1,k}^{(\delta)}
.$$
3. Induction step: we will show that this statement is true for $(m+1)$. \\ 
We have to show the following statement to be true:  
$$
\forall k \in \mathbb{N}, \Delta^{m+1} x_{1,k}^{(\delta)}=x_{1,k}^{(\delta)}
.$$ 
By definition, 
$
\Delta^{m+1} x_{1,k}^{(\delta)} = \Delta^{m} x_{1,k}^{(\delta)} - \Delta^{m} x_{0,k}^{(\delta)} 
$. \\
From Lemma \ref{l 3.1}, $\Delta^{m} x_{0,k}^{(\delta)} =0$.
Hence, 
$
\Delta^{m+1} x_{1,k}^{(\delta)} = \Delta^{m} x_{1,k}^{(\delta)}
$. \\
By applying the induction hypothesis, we get 
$\Delta^{m+1} x_{1,k}^{(\delta)} =x_{1,k}^{(\delta)}$.
\end{proof} 
\begin{remark}
{\em From Lemma \ref{l 3.2} for $k=m$, we see that the first term of a repeated sequence of order $m$ is equal to its $m$-th difference ($x_{1,m}^{(\delta)}=\Delta^m x_{1,m}^{(\delta)}=\delta$).}
\end{remark}
\begin{lemma} \label{l 3.3}
For any $m,k,n \in \mathbb{N}$ where $n \geq 1$, we have that 
$$
\sum_{N=1}^{n}{\Delta^{m} x_{N,k}^{(\delta)}}
=\Delta^{m-1} x_{n,k}^{(\delta)}
.$$
\end{lemma}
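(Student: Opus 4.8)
The plan is to recognize this identity as a telescoping sum driven directly by the recurrent definition of the $m$-th difference. By definition, each summand satisfies $\Delta^{m} x_{N,k}^{(\delta)} = \Delta^{m-1} x_{N,k}^{(\delta)} - \Delta^{m-1} x_{N-1,k}^{(\delta)}$, so substituting this into the left-hand side rewrites the sum as
$$
\sum_{N=1}^{n}\left(\Delta^{m-1} x_{N,k}^{(\delta)} - \Delta^{m-1} x_{N-1,k}^{(\delta)}\right).
$$
Consecutive terms cancel in pairs, so the interior collapses and only the two extreme terms survive, leaving $\Delta^{m-1} x_{n,k}^{(\delta)} - \Delta^{m-1} x_{0,k}^{(\delta)}$. (Here we take $m \geq 1$ so that $\Delta^{m-1}$ is defined, consistent with the statement.)

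The only step beyond this bookkeeping is disposing of the lower boundary term at index $0$, and this is precisely where an earlier result is needed: Lemma \ref{l 3.1} guarantees $\Delta^{m-1} x_{0,k}^{(\delta)} = 0$, applied with $i = 0 \leq 0$ and with order $m-1$ in place of $m$. With this boundary term killed, the right-hand endpoint alone remains, yielding $\Delta^{m-1} x_{n,k}^{(\delta)}$, which is exactly the claimed identity.

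I do not anticipate any genuine obstacle: the argument is essentially one line of telescoping once the definition of $\Delta^m$ and Lemma \ref{l 3.1} are in hand. To match the inductive style used elsewhere in the paper, and to justify the collapse of the telescoping sum rigorously rather than merely asserting it, the cleanest alternative is a short induction on $n$: the base case $n=1$ follows from $\Delta^{m} x_{1,k}^{(\delta)} = \Delta^{m-1} x_{1,k}^{(\delta)} - \Delta^{m-1} x_{0,k}^{(\delta)}$ together with Lemma \ref{l 3.1}, and the inductive step is immediate from the recurrence $\Delta^{m} x_{n+1,k}^{(\delta)} = \Delta^{m-1} x_{n+1,k}^{(\delta)} - \Delta^{m-1} x_{n,k}^{(\delta)}$, which converts the sum up to $n+1$ into $\Delta^{m-1} x_{n,k}^{(\delta)} + \bigl(\Delta^{m-1} x_{n+1,k}^{(\delta)} - \Delta^{m-1} x_{n,k}^{(\delta)}\bigr)$.
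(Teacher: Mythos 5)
Your proof is correct and takes essentially the same approach as the paper, which also proceeds by induction on $n$ using the recurrence $\Delta^{m} x_{n+1,k}^{(\delta)} = \Delta^{m-1} x_{n+1,k}^{(\delta)} - \Delta^{m-1} x_{n,k}^{(\delta)}$ in the inductive step; your telescoping argument is just the unrolled form of that induction. The only cosmetic difference is the base case: the paper invokes Lemma \ref{l 3.2} (that $\Delta^{m} x_{1,k}^{(\delta)} = x_{1,k}^{(\delta)}$ for all $m$), whereas you kill the index-$0$ boundary term directly via Lemma \ref{l 3.1}; since Lemma \ref{l 3.2} is itself derived from Lemma \ref{l 3.1}, the content is identical.
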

\begin{proof}
1. Base case: verify true for $n=1$. \\
From Lemma \ref{l 3.2}, $\Delta^{m} x_{1,k}^{(\delta)}=\Delta^{m-1} x_{1,k}^{(\delta)}=x_{1,k}^{(\delta)}$, 
$$
\sum_{N=1}^{1}{\Delta^{m} x_{N,k}^{(\delta)}}
=\Delta^{m} x_{1,k}^{(\delta)}
=\Delta^{m-1} x_{1,k}^{(\delta)}
.$$
2. Induction hypothesis: assume the statement is true until $n$. 
$$
\sum_{N=1}^{n}{\Delta^{m} x_{N,k}^{(\delta)}}
=\Delta^{m-1} x_{n,k}^{(\delta)}
.$$
3. Induction step: we will show that this statement is true for $(n+1)$. \\ 
We have to show the following statement to be true:  
$$
\sum_{N=1}^{n+1}{\Delta^{m} x_{N,k}^{(\delta)}}
=\Delta^{m-1} x_{n+1,k}^{(\delta)}
.$$
$$ \\ $$ 
$$
\sum_{N=1}^{n+1}{\Delta^{m} x_{N,k}^{(\delta)}}
=\sum_{N=1}^{n}{\Delta^{m} x_{N,k}^{(\delta)}}+\Delta^{m} x_{n+1,k}^{(\delta)}
.$$
Applying the induction hypothesis to the first term and using the definition of the second term, 
$$
\sum_{N=1}^{n+1}{\Delta^{m} x_{N,k}^{(\delta)}}
=\Delta^{m-1} x_{n,k}^{(\delta)}+\left[\Delta^{m-1} x_{n+1,k}^{(\delta)}-\Delta^{m-1} x_{n,k}^{(\delta)}\right]
=\Delta^{m-1} x_{n+1,k}^{(\delta)}
.$$
\end{proof}
\begin{lemma} \label{l 3.4}
For any $m,k,j,n \in \mathbb{N}$ such that $n \geq 1$ and $1 \leq j \leq m$, we have that 
$$
\sum_{N_j=1}^{n}{\cdots \sum_{N_2=1}^{N_3}{\sum_{N_1=1}^{N_2}{\Delta^{m} x_{N_1,k}^{(\delta)}}}}
=\Delta^{m-j} x_{n,k}^{(\delta)}
.$$
\end{lemma}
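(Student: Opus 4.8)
The plan is to prove the statement by induction on the number of nested sums $j$, with Lemma \ref{l 3.3} serving both as the base case and as the engine of the inductive step. The key observation is that the $j$-fold repeated sum is constructed by nesting, so the outermost summation can always be peeled off to expose a repeated sum of exactly the same shape but with one fewer level, whose upper limit is the outer bound variable.

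For the base case $j=1$ the claim reads $\sum_{N_1=1}^{n}\Delta^m x_{N_1,k}^{(\delta)} = \Delta^{m-1} x_{n,k}^{(\delta)}$, which is precisely Lemma \ref{l 3.3}. For the inductive step I would assume the statement holds for some $j$ with $1 \le j < m$ (and for every admissible upper bound), and then write the $(j+1)$-fold sum as
$$
\sum_{N_{j+1}=1}^{n}\left( \sum_{N_j=1}^{N_{j+1}} \cdots \sum_{N_1=1}^{N_2} \Delta^m x_{N_1,k}^{(\delta)} \right).
$$
Applying the induction hypothesis to the inner $j$-fold sum, whose upper limit is now the bound variable $N_{j+1}$ rather than $n$, collapses the parenthesized expression to $\Delta^{m-j} x_{N_{j+1},k}^{(\delta)}$, leaving
$$
\sum_{N_{j+1}=1}^{n} \Delta^{m-j} x_{N_{j+1},k}^{(\delta)}.
$$
A single further application of Lemma \ref{l 3.3}, now with the order $m-j$ in place of $m$, turns this into $\Delta^{(m-j)-1} x_{n,k}^{(\delta)} = \Delta^{m-(j+1)} x_{n,k}^{(\delta)}$, which is exactly the claim for $j+1$.

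The argument is essentially routine, so the only points requiring care are bookkeeping rather than genuine obstacles. First, I must respect the constraint $1 \le j \le m$: the inductive step runs only while $j < m$, which guarantees $m-j \ge 1$ so that Lemma \ref{l 3.3} may legitimately be applied at order $m-j$ and the resulting difference $\Delta^{m-(j+1)}$ retains non-negative order. Second, when invoking the induction hypothesis I must apply it with the free variable $N_{j+1}$ as the upper limit, which is why the hypothesis needs to be read as holding for all $n \ge 1$ rather than for a single fixed $n$. Because both the base case and the step reduce entirely to Lemma \ref{l 3.3}, there is no delicate estimate or case analysis; the whole proof is just the observation that each layer of summation lowers the order of the difference by exactly one.
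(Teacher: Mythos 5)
Your proof is correct and follows essentially the same route as the paper: induction on $j$ with Lemma \ref{l 3.3} as the base case, peeling off the outer sum, applying the induction hypothesis with upper limit $N_{j+1}$, and finishing with one more application of Lemma \ref{l 3.3} at order $m-j$. Your added bookkeeping about keeping $j<m$ so that $m-j\geq 1$ is a point the paper leaves implicit.
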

\begin{proof}
1. Base case: verify true for $j=1$. \\
Proven true in Lemma \ref{l 3.3}. \\
2. Induction hypothesis: assume the statement is true until $j$. 
$$
\sum_{N_j=1}^{n}{\cdots \sum_{N_1=1}^{N_2}{\Delta^{m} x_{N_1,k}^{(\delta)}}}
=\Delta^{m-j} x_{n,k}^{(\delta)}
.$$
3. Induction step: we will show that this statement is true for $(j+1)$. \\ 
We have to show the following statement to be true:  
$$
\sum_{N_{j+1}=1}^{n}{\cdots \sum_{N_1=1}^{N_2}{\Delta^{m} x_{N_1,k}^{(\delta)}}}
=\Delta^{m-j-1} x_{n,k}^{(\delta)}
.$$
Using the induction hypothesis, 
$$
\sum_{N_{j+1}=1}^{n}{\cdots \sum_{N_1=1}^{N_2}{\Delta^{m} x_{N_1,k}^{(\delta)}}}
=\sum_{N_{j+1}=1}^{n}{\sum_{N_{j}=1}^{N_{j+1}}{\cdots \sum_{N_1=1}^{N_2}{\Delta^{m} x_{N_1,k}^{(\delta)}}}}
=\sum_{N_{j+1}=1}^{n}{\Delta^{m-j} x_{N_{j+1},k}^{(\delta)}}
.$$
Applying Lemma \ref{l 3.3}, we get the lemma.  
\end{proof}
A particular case of Lemma \ref{l 3.4} for $j=k=m$ is as follows. This theorem relates the repeated sequence to the repeated sum of ones.  
\begin{theorem} \label{t 3.4}
Any element of the $m$-th degree repeated sequence can be defined as an $m$-th order repeated sum of ones, 
$$
x_{n,m}^{(\delta)}
=\Delta^m x_{n,m}^{(\delta)} \sum_{N_m=1}^{n}{\cdots \sum_{N_2=1}^{N_3}{\sum_{N_1=1}^{N_2}{1}}}
=\delta \sum_{N_m=1}^{n}{\cdots \sum_{N_2=1}^{N_3}{\sum_{N_1=1}^{N_2}{1}}}
.$$
\end{theorem}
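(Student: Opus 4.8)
The plan is to obtain the statement as a direct specialization of Lemma~\ref{l 3.4}. First I would set $j=k=m$ in that lemma, which yields
$$
\sum_{N_m=1}^{n}{\cdots \sum_{N_2=1}^{N_3}{\sum_{N_1=1}^{N_2}{\Delta^{m} x_{N_1,m}^{(\delta)}}}}
=\Delta^{0} x_{n,m}^{(\delta)}
=x_{n,m}^{(\delta)},
$$
where the last equality uses the defining convention $\Delta^{0}x_{i}=x_{i}$. This already places $x_{n,m}^{(\delta)}$ in the form of an $m$-fold repeated sum, so the only remaining task is to evaluate the summand $\Delta^{m}x_{N_1,m}^{(\delta)}$.

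Next I would invoke the definition of a repeated sequence of degree $m$: its $m$-th difference is constant, namely $\Delta^{m}x_{i,m}^{(\delta)}=\delta$ for every index $i\in\mathbb{N}^{*}$. The key observation is that every summation index appearing in the repeated sum is at least $1$ — in particular the innermost variable $N_1$ runs from $1$ to $N_2\geq 1$ — so the summand is constant and equal to $\delta$ throughout the entire range. Consequently $\delta$ can be pulled outside all $m$ nested sums, giving
$$
x_{n,m}^{(\delta)}
=\sum_{N_m=1}^{n}{\cdots \sum_{N_1=1}^{N_2}{\delta}}
=\delta\sum_{N_m=1}^{n}{\cdots \sum_{N_1=1}^{N_2}{1}}.
$$
Finally, since by definition $\delta=\Delta^{m}x_{n,m}^{(\delta)}$, substituting $\delta$ back recovers the first of the two displayed equalities, so both forms in the statement follow simultaneously.

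I do not expect any genuine obstacle here: the result is essentially a repackaging of Lemma~\ref{l 3.4} once the constant summand is factored out. The only point requiring a moment of care is verifying that the constancy of the $m$-th difference holds over the whole summation domain rather than at isolated indices alone; this is guaranteed because all indices are positive, so the extension convention $x_{i,m}^{(\delta)}=0$ for $i\leq 0$ — where the repeated-sequence property is not assumed to hold — is never triggered inside the sum.
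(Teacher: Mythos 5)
Your proposal is correct and follows exactly the paper's own argument: specialize Lemma~\ref{l 3.4} to $j=k=m$, use the constancy of the $m$-th difference to pull $\delta=\Delta^m x_{n,m}^{(\delta)}$ out of the nested sums, and conclude. Your additional remark that all summation indices are positive (so the zero-extension convention is never invoked) is a worthwhile point of care that the paper leaves implicit.
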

\begin{proof}
A particular case of Lemma \ref{l 3.4} for $j=k=m$ is as follows, 
$$
\sum_{N_m=1}^{n}{\cdots \sum_{N_2=1}^{N_3}{\sum_{N_1=1}^{N_2}{\Delta^{m} x_{N_1,m}^{(\delta)}}}}
= x_{n,m}^{(\delta)}
.$$
By definition of a repeated sequence of degree $m$, $\Delta^m x_{n,m}^{(\delta)}$ is a constant. Hence, we can take it out of the summation. Doing so gives the desired theorem. 
\end{proof}
For repeated unity sequences, Theorem \ref{t 3.4} shows that these sequences are equal to repeated sums of ones. 
\begin{corollary} \label{c 3.1}
Any element of the $m$-th degree unity repeated sequence can be defined as an $m$-th order repeated sum of ones, 
$$
\hat{x}_{n,m}
=\sum_{N_m=1}^{n}{\cdots \sum_{N_2=1}^{N_3}{\sum_{N_1=1}^{N_2}{1}}}
.$$
\end{corollary}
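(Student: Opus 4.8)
The plan is to recognize this as an immediate specialization of Theorem \ref{t 3.4}, obtained by fixing the value of the constant $m$-th difference. Recall that by definition the unity repeated sequence $\{\hat{x}_{n,m}\}_{n=1}^{\infty}$ is precisely the repeated sequence of order $m$ whose $m$-th difference equals $1$, that is, $\forall i \in \mathbb{N}^*,\ \Delta^m \hat{x}_{i,m}=1$. In the notation of the preceding remarks, this means $\hat{x}_{n,m}=x_{n,m}^{(1)}$, the repeated sequence associated to the choice $\delta=1$.

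With this identification in hand, the argument is a single substitution. First I would invoke Theorem \ref{t 3.4}, which asserts that for any repeated sequence of degree $m$,
$$
x_{n,m}^{(\delta)}
=\delta \sum_{N_m=1}^{n}{\cdots \sum_{N_2=1}^{N_3}{\sum_{N_1=1}^{N_2}{1}}}.
$$
Setting $\delta=1$, the scalar prefactor collapses to $1$ and the right-hand side reduces to the bare $m$-th order repeated sum of ones. Since $\hat{x}_{n,m}=x_{n,m}^{(1)}$, the left-hand side is exactly $\hat{x}_{n,m}$, which yields the claimed identity.

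Strictly speaking there is no real obstacle here: the corollary is a direct instantiation and requires no further induction or computation beyond what Theorem \ref{t 3.4} already supplies. The only point meriting a word of care is making explicit that the unity repeated sequence satisfies the hypotheses of Theorem \ref{t 3.4}, namely that it genuinely is a repeated sequence of degree $m$ (so that its $m$-th difference is constant and may be pulled out of the nested summation) and that it obeys the vanishing extension $\hat{x}_{i,m}=0$ for $i\leq 0$ under which the earlier lemmas were established. Both hold by definition, so once this is noted the proof concludes at once.
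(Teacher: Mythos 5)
Your proposal matches the paper's proof exactly: both simply observe that the unity repeated sequence has $\Delta^m \hat{x}_{n,m}=1$ and specialize Theorem \ref{t 3.4} to $\delta=1$. The extra remark about verifying the vanishing extension is harmless added care but not needed beyond what the definitions already guarantee.
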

\begin{proof}
Knowing that $\Delta^m \hat{x}_{n,m}=1$, this corollary is a direct consequence of Theorem \ref{t 3.4}. 
\end{proof}
\subsubsection{Explicit expression for a repeated sequence as a binomial coefficient}
We now prove that repeated sequences can be defined as binomial coefficients. 
\begin{theorem} \label{t 3.5}
For any $m,n \in \mathbb{N}$ where $n \geq 1$, we have that 
$$
x_{n,m}^{(\delta)}
=\Delta^{m}x_{n,m}^{(\delta)} \binom{n+m-1}{m}
=\delta \binom{n+m-1}{m}
.$$
\end{theorem}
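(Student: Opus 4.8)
The plan is to derive this theorem as an immediate consequence of Theorem \ref{t 3.4} combined with Theorem \ref{t 3.1}, since the hard combinatorial and inductive work has already been carried out. Theorem \ref{t 3.4} expresses any element of an $m$-th degree repeated sequence as the constant $m$-th difference multiplied by an $m$-fold repeated sum of ones, namely
$$
x_{n,m}^{(\delta)}
=\Delta^m x_{n,m}^{(\delta)} \sum_{N_m=1}^{n}{\cdots \sum_{N_2=1}^{N_3}{\sum_{N_1=1}^{N_2}{1}}}.
$$
So all that remains is to evaluate that innermost repeated sum of ones in closed form.

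First I would invoke the first equality of Theorem \ref{t 3.1}, which states precisely that the $m$-fold repeated sum of ones starting the indices at $1$ equals $\binom{n+m-1}{m}$. Substituting this evaluation into the expression from Theorem \ref{t 3.4} immediately yields
$$
x_{n,m}^{(\delta)}
=\Delta^m x_{n,m}^{(\delta)} \binom{n+m-1}{m}.
$$
This establishes the first equality of the theorem.

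For the second equality, I would appeal to the definition of a repeated sequence of degree $m$ together with the remark following Lemma \ref{l 3.2}: by definition the $m$-th difference of $\{x_{n,m}^{(\delta)}\}$ is the constant $\delta$, i.e. $\Delta^m x_{n,m}^{(\delta)}=\delta$ for all $n$. Replacing the factor $\Delta^m x_{n,m}^{(\delta)}$ by $\delta$ then gives $x_{n,m}^{(\delta)}=\delta \binom{n+m-1}{m}$, completing the chain of equalities.

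I do not anticipate a genuine obstacle here, since the statement is purely a repackaging of two earlier results and a definition; the only point requiring a little care is to ensure the index ranges match exactly—that Theorem \ref{t 3.4} produces the repeated sum of ones with lower limits equal to $1$ (not $0$), so that the \emph{first} equality of Theorem \ref{t 3.1}, giving $\binom{n+m-1}{m}$ rather than $\binom{n+m}{m}$, is the correct one to apply. Once that alignment is verified the result follows by direct substitution.
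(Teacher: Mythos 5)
Your proof is correct and follows essentially the same route as the paper: substitute the closed form of the $m$-fold repeated sum of ones (with lower limits $1$) into Theorem \ref{t 3.4} and then use $\Delta^m x_{n,m}^{(\delta)}=\delta$. If anything, your citation of the first equality of Theorem \ref{t 3.1} is more directly on point than the paper's reference to Corollary \ref{c 2.3}, which concerns sums starting at $0$.
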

\begin{proof}
By applying Corollary \ref{c 2.3} to Theorem \ref{t 3.4}, we obtain the theorem. 
\end{proof}
\begin{corollary} \label{c 3.2}
For any $m,n \in \mathbb{N}$ where $n \geq 1$, we have that 
$$
\hat{x}_{n,m}=\binom{n+m-1}{m}
.$$
\end{corollary}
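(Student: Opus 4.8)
The plan is to recognize that the unity repeated sequence is nothing but the specialization $\delta = 1$ of the general repeated sequence $\{x_{n,m}^{(\delta)}\}$. By definition, the unity repeated sequence $\{\hat{x}_{n,m}\}$ satisfies $\Delta^m \hat{x}_{i,m} = 1$ for all $i \in \mathbb{N}^*$, which is precisely the condition $\Delta^m x_{i,m}^{(\delta)} = \delta$ with $\delta = 1$. Once this identification is made, Theorem \ref{t 3.5} can be invoked directly.

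Concretely, I would first substitute $\delta = 1$ into the statement of Theorem \ref{t 3.5}. That theorem gives
$$
x_{n,m}^{(\delta)} = \delta \binom{n+m-1}{m},
$$
so setting $\delta = 1$ and writing $\hat{x}_{n,m}$ in place of $x_{n,m}^{(1)}$ yields $\hat{x}_{n,m} = \binom{n+m-1}{m}$, which is exactly the claim. The domain hypothesis $m,n \in \mathbb{N}$ with $n \geq 1$ carries over unchanged from Theorem \ref{t 3.5}.

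As an equivalent alternative route, I would combine Corollary \ref{c 3.1}, which expresses $\hat{x}_{n,m}$ as the $m$-fold repeated sum of ones $\sum_{N_m=1}^{n}\cdots\sum_{N_1=1}^{N_2}{1}$, with the first equation of Theorem \ref{t 3.1}, which evaluates that same repeated sum of ones as $\binom{n+m-1}{m}$; chaining the two gives the corollary immediately.

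There is no genuine obstacle here: all of the analytic content has already been established in Theorem \ref{t 3.5} (and upstream in Lemma \ref{l 3.4} and Corollary \ref{c 2.4}). The only point requiring any care is the \emph{identification} that the defining property of the unity repeated sequence, $\Delta^m \hat{x}_{i,m} = 1$, is the same as choosing the constant $m$-th difference $\delta = 1$; once that is noted, the result is a one-line substitution and the corollary follows directly from Theorem \ref{t 3.5}.
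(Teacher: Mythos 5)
Your proof is correct and matches the paper's intended argument: the corollary is simply Theorem \ref{t 3.5} specialized to $\delta=1$, exactly as the analogous Corollary \ref{c 3.1} follows from Theorem \ref{t 3.4}. The alternative route via Corollary \ref{c 3.1} and Theorem \ref{t 3.1} is also valid but adds nothing beyond the one-line substitution.
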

\subsubsection{Sum of repeated sequences}
Now that the needed expressions for repeated sequences were developed, one can proceed to develop formulae for the sum of the terms of such sequences. 
\begin{theorem} \label{t 3.6}
For any $k \in \mathbb{N^*}$ and for any $m,n \in \mathbb{N}$ where $n \geq 1$, we have that 
\begin{equation*} 
\sum_{N_k=1}^{n}{\cdots \sum_{N_1=1}^{N_2}{x_{N_1,m}^{(\delta)}}}
=\Delta^{m} x_{n,m}^{(\delta)}\sum_{N_{k+m}=1}^{n}{\cdots \sum_{N_1=1}^{N_2}{1}}
=\Delta^{m} x_{n,m}^{(\delta)}\binom{n+m+k-1}{m+k}
=x_{n,m+k}^{(\delta)}
.\end{equation*}
\end{theorem}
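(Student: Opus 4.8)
The plan is to obtain all three equalities by direct citation of the earlier results, with no fresh induction required; the substance of the argument lies entirely in rewriting each term $x_{N_1,m}^{(\delta)}$ via its repeated-sum-of-ones representation (Theorem \ref{t 3.4}) and then recognising the result as a single, longer repeated sum of ones.

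First I would establish the leftmost equality. By Theorem \ref{t 3.4}, each term of the sequence satisfies
$$
x_{N_1,m}^{(\delta)} = \Delta^m x_{N_1,m}^{(\delta)} \sum_{M_m=1}^{N_1}\cdots\sum_{M_1=1}^{M_2}1,
$$
and since $(x_{n,m}^{(\delta)})$ is a repeated sequence of degree $m$, its $m$-th difference is the constant $\delta=\Delta^m x_{n,m}^{(\delta)}$, independent of $N_1$. Substituting this into the $k$-fold sum and pulling the constant past all $k$ summations gives
$$
\sum_{N_k=1}^{n}\cdots\sum_{N_1=1}^{N_2}x_{N_1,m}^{(\delta)} = \Delta^m x_{n,m}^{(\delta)}\sum_{N_k=1}^{n}\cdots\sum_{N_1=1}^{N_2}\left(\sum_{M_m=1}^{N_1}\cdots\sum_{M_1=1}^{M_2}1\right).
$$
The key observation is that the inner $m$-fold block has upper index $N_1$, which is precisely the running index of the innermost of the outer $k$ summations; hence the nested object is itself a repeated sum of ones, now of depth $k+m$. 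Relabelling the $k+m$ indices yields $\sum_{N_{k+m}=1}^{n}\cdots\sum_{N_1=1}^{N_2}1$, which is the first claimed equality.

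The remaining two equalities are immediate. Applying Theorem \ref{t 3.1} (first equation) with $m$ replaced by $k+m$ evaluates the depth-$(k+m)$ sum of ones as $\binom{n+(k+m)-1}{k+m}=\binom{n+m+k-1}{m+k}$, which yields the second equality. For the last equality I would invoke Theorem \ref{t 3.5} at order $m+k$: the repeated sequence $(x_{n,m+k}^{(\delta)})$ has constant $(m+k)$-th difference $\delta$, so $x_{n,m+k}^{(\delta)}=\delta\binom{n+m+k-1}{m+k}$, and since $\Delta^m x_{n,m}^{(\delta)}=\delta$ as well, the product $\Delta^m x_{n,m}^{(\delta)}\binom{n+m+k-1}{m+k}$ equals $x_{n,m+k}^{(\delta)}$.

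The only genuinely delicate point is the bookkeeping in the first step: one must check that gluing the degree-$m$ sum-of-ones representation of each term onto the outer $k$ summations produces a bona fide $(k+m)$-fold repeated sum with the correct telescoping bounds, rather than two independent blocks. Because the top limit of the inserted block is exactly $N_1$, the nesting is compatible and the concatenation is legitimate; everything else is a direct application of Theorems \ref{t 3.1}, \ref{t 3.4} and \ref{t 3.5}.
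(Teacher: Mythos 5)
Your proposal is correct and follows essentially the same route as the paper's own proof: substitute the repeated-sum-of-ones representation from Theorem \ref{t 3.4} for each term, pull out the constant $m$-th difference, merge the nesting into a $(k+m)$-fold sum of ones, and evaluate. The only cosmetic difference is that you close the third equality via Theorem \ref{t 3.5} at order $m+k$ rather than reapplying Theorem \ref{t 3.4}, which amounts to the same thing.
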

\begin{proof}
Applying Theorem \ref{t 3.4}, we get 
\begin{equation*}
\sum_{N_k=1}^{n}{\cdots \sum_{N_1=1}^{N_2}{x_{N_1,m}^{(\delta)}}}
=\sum_{N_k=1}^{n}{\cdots \sum_{N_1=1}^{N_2}{ \sum_{j_m=1}^{N_1}{\cdots \sum_{j_2=1}^{j_3}{\sum_{j_1=1}^{j_2}{\Delta^m x_{N_1,m}^{(\delta)}}}}}}
.\end{equation*}
Knowing that $\forall i,j \in \mathbb{N}^*, \,\,\,\, \Delta^m x_{i,m}^{(\delta)}=\Delta^m x_{j,m}^{(\delta)}$, hence, $\Delta^m x_{N_1,m}^{(\delta)}=\Delta^m x_{n,m}^{(\delta)}$. 
\begin{equation*}
\sum_{N_k=1}^{n}{\cdots \sum_{N_1=1}^{N_2}{x_{N_1,m}^{(\delta)}}}
=\Delta^m x_{n,m}^{(\delta)}\sum_{N_k=1}^{n}{\cdots \sum_{N_1=1}^{N_2}{ \sum_{j_m=1}^{N_1}{\cdots \sum_{j_2=1}^{j_3}{\sum_{j_1=1}^{j_2}{1}}}}}
=\Delta^m x_{n,m}^{(\delta)}\sum_{N_{k+m}=1}^{n}{\cdots \sum_{N_1=1}^{N_2}{1}}
.\end{equation*}
The first equality is proven.
The second equality is obtained by applying Theorem \ref{t 2.1}. \\
Knowing that $\Delta^{m+k} x_{n,m+k}^{(\delta)}=\Delta^{m} x_{n,m}^{(\delta)}=\delta$, we apply Theorem \ref{t 3.4} to get 
$$
x_{n,m+k}^{(\delta)}
=\Delta^m x_{n,m+k}^{(\delta)} \sum_{N_{m+k}=1}^{n}{\cdots \sum_{N_2=1}^{N_3}{\sum_{N_1=1}^{N_2}{1}}}
=\Delta^m x_{n,m}^{(\delta)} \sum_{N_{m+k}=1}^{n}{\cdots \sum_{N_2=1}^{N_3}{\sum_{N_1=1}^{N_2}{1}}}
.$$
This completes the proof of the third equality.  
\end{proof}
\begin{corollary} \label{c 3.3}
For any $k \in \mathbb{N^*}$ and for any $m,n \in \mathbb{N}$ where $n \geq 1$, we have that 
\begin{equation*}
\sum_{N_k=1}^{n}{\cdots \sum_{N_1=1}^{N_2}{\hat{x}_{N_1,m}}}
=\sum_{N_{k+m}=1}^{n}{\cdots \sum_{N_1=1}^{N_2}{1}}
=\binom{n+m+k-1}{m+k}
=\hat{x}_{n,m+k}
.\end{equation*}
\end{corollary}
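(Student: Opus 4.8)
The plan is to obtain this corollary as the direct specialization of Theorem \ref{t 3.6} to the unity repeated sequence. The key observation is that the unity repeated sequence $\{\hat{x}_{n,m}\}$ is precisely the repeated sequence of order $m$ whose $m$-th difference equals the constant $\delta = 1$; that is, $\hat{x}_{n,m} = x_{n,m}^{(1)}$ with $\Delta^m \hat{x}_{n,m} = 1$ for all $n \geq 1$ by the definition of the unity repeated sequence.

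First I would invoke Theorem \ref{t 3.6} with the sequence $x_{N_1,m}^{(\delta)}$ taken to be $\hat{x}_{N_1,m}$, so that the constant $m$-th difference is $\delta = 1$. Substituting $\Delta^m x_{n,m}^{(\delta)} = 1$ into the three-part chain of equalities supplied by that theorem collapses the multiplicative factor and yields the first equality, namely that the $k$-fold repeated sum of $\hat{x}_{N_1,m}$ equals the $(k+m)$-fold repeated sum of ones.

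Next, the value $\binom{n+m+k-1}{m+k}$ is read off directly from the middle term of Theorem \ref{t 3.6} (which itself rests on Theorem \ref{t 2.1}); alternatively one may recognize the $(k+m)$-fold repeated sum of ones starting at $1$ via Theorem \ref{t 3.1} with $m$ replaced by $m+k$. The final equality $\hat{x}_{n,m+k}$ follows from the rightmost term of Theorem \ref{t 3.6}, since $\Delta^{m+k}\hat{x}_{n,m+k} = 1$ as well, so the unity repeated sequence of order $m+k$ coincides with this repeated sum of ones; equivalently, one applies Corollary \ref{c 3.2} at order $m+k$.

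Since every step is a substitution of the constant $\delta = 1$ into an already-established theorem, I do not expect any genuine obstacle. The only point requiring care is the bookkeeping that the constant $m$-th difference of $\hat{x}_{n,m}$ and the constant $(m+k)$-th difference of $\hat{x}_{n,m+k}$ are both equal to $1$, so that all four expressions in the statement are consistently tied together through the same value of $\delta$.
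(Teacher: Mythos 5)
Your proposal is correct and matches the paper's own proof, which likewise obtains the corollary by applying Theorem \ref{t 3.6} to the unity repeated sequence and noting that $\Delta^m \hat{x}_{n,m}=1$. The extra bookkeeping you supply (that $\Delta^{m+k}\hat{x}_{n,m+k}=1$ as well, tying the last equality to Corollary \ref{c 3.2}) is a sound elaboration of the same argument.
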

\begin{proof}
Applying Theorem \ref{t 3.6} and noting that $\Delta^m \hat{x}_{n,m}=1$, we obtain this corollary. 
\end{proof}
\begin{example}
{\em The sequence $(\hat{x}_{n,1})$ is a sequence of terms where the difference of two consecutive terms is 1: $1,2,3,4,5, \ldots$ . Using Corollary \ref{c 3.3}, we calculate the repeated sum of order $3$ of its first $10$ terms as follows: }
$$
\sum_{N_3=1}^{10}{\sum_{N_2=1}^{N_3}{\sum_{N_1=1}^{N_2}{\hat{x}_{N_1,1}}}}
=\binom{10+1+3-1}{1+3}=\binom{13}{4}=715
.$$
\end{example}
\begin{example}
{\em The sequence $(\hat{x}_{n,2})$ is a sequence of terms where the difference of two consecutive terms is increasing by 1: $1,3,6,10,15, \ldots$ . Using Corollary \ref{c 3.3}, we calculate the repeated sum of order $3$ of its first $10$ terms as follows: }
$$
\sum_{N_3=1}^{10}{\sum_{N_2=1}^{N_3}{\sum_{N_1=1}^{N_2}{\hat{x}_{N_1,2}}}}
=\binom{10+2+3-1}{2+3}=\binom{14}{5}=2002
.$$
\end{example}
\section{Reduction of Binomial-Harmonic sums}
\label{Reduction of Binomial-Harmonic Sums}
The harmonic sum has been studied independently by Oresme \cite{Oresme}, Mengoli \cite{Mengoli}, Johann Bernoulli \cite{JohannBernoulli}, Jacob Bernoulli \cite{JacobBernoulli1,JacobBernoulli2}, and most importantly by Euler. A particularly interesting variant of this sum is that combining binomial coefficients with harmonic sums. We will refer to such sums as {\it binomial-harmonic sums}. In this section, we develop various formulas for the reduction and calculation of such sums. We also present a formula relating the repeated harmonic sum to the shifted harmonic sum. 
\subsection{Binomial-Harmonic sum}
We begin by proving the following reduction formula to simplify Binomial-Harmonic sums. 
\begin{theorem} \label{t 4.1}
For any $m,p,n \in \mathbb{N}$ such that $n \geq p$, we have that 
$$
\sum_{N=p}^{n}{\binom{N+m}{m}\sum_{i=1+m}^{N+m}{\frac{1}{i}}}
=\binom{n+m+1}{m+1}\sum_{i=2+m}^{n+m+1}{\frac{1}{i}}
-\binom{p+m}{m+1}\sum_{i=2+m}^{p+m}{\frac{1}{i}}
.$$
\end{theorem}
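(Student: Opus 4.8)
The plan is to prove the identity by induction on the upper limit $n$, mirroring the inductive arguments used throughout the paper (e.g.\ Theorems~\ref{t 2.1} and \ref{t 2.2}). Observe first that the term $\binom{p+m}{m+1}\sum_{i=2+m}^{p+m}\frac{1}{i}$ on the right-hand side depends only on $p$ and $m$, not on $n$; it is therefore inert under the induction and is simply carried along. Consequently, once the induction hypothesis is invoked in the step $n \to n+1$, the whole claim reduces to a local identity matching the increments of the two sides.

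For the base case $n = p$, the left-hand side is $\binom{p+m}{m}\sum_{i=1+m}^{p+m}\frac{1}{i}$. Applying Pascal's rule $\binom{p+m+1}{m+1}=\binom{p+m}{m+1}+\binom{p+m}{m}$ to the right-hand side and regrouping, the two harmonic sums combine into the single difference $\frac{1}{1+m}-\frac{1}{p+m+1}$, so the verification collapses to the elementary identity $\frac{1}{m+1}\binom{p+m}{m}=\frac{1}{p+m+1}\binom{p+m+1}{m+1}$, i.e.\ the absorption identity $(m+1)\binom{M+1}{m+1}=(M+1)\binom{M}{m}$ with $M=p+m$. This is immediate from the factorial definition.

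For the induction step, write $M = n+m+1$ and add the new summand $\binom{n+1+m}{m}\sum_{i=1+m}^{n+1+m}\frac{1}{i}$ to the induction hypothesis. I would peel off the extra harmonic terms via $\sum_{i=2+m}^{M+1}\frac{1}{i}=\sum_{i=2+m}^{M}\frac{1}{i}+\frac{1}{M+1}$ and $\sum_{i=1+m}^{M}\frac{1}{i}=\sum_{i=2+m}^{M}\frac{1}{i}+\frac{1}{1+m}$, so that after applying Pascal's rule to merge $\binom{M}{m+1}+\binom{M}{m}=\binom{M+1}{m+1}$ both sides carry the same multiple of $\sum_{i=2+m}^{M}\frac{1}{i}$. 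What remains is precisely the requirement $\frac{1}{1+m}\binom{M}{m}=\frac{1}{M+1}\binom{M+1}{m+1}$, again the absorption identity.

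Conceptually this is just Abel summation (summation by parts): the left-hand side is $\sum_{N=p}^n \binom{N+m}{m}\,b_N$ with $b_N=\sum_{i=1+m}^{N+m}\frac1i$ and increment $b_N-b_{N-1}=\frac{1}{N+m}$, while Corollary~\ref{c 2.2} supplies the partial sums $\sum_{N=p}^{n}\binom{N+m}{m}=\binom{n+m+1}{m+1}-\binom{p+m}{m+1}$. The only genuine obstacle is bookkeeping: the harmonic factors on the two sides run over the shifted ranges $[\,1+m,\ \cdot\,]$ versus $[\,2+m,\ \cdot\,]$, so one must track carefully which endpoint terms are created or destroyed when peeling off a level. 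Once those ranges are aligned, all binomial manipulation is handled by Pascal's rule together with the absorption identity, with no deeper difficulty.
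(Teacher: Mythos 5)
Your proposal is correct and follows essentially the same route as the paper's own proof: induction on $n$, with both the base case and the inductive step resolved by Pascal's rule together with the absorption identity $\frac{1}{m+1}\binom{M}{m}=\frac{1}{M+1}\binom{M+1}{m+1}$. The closing remark identifying the statement as an instance of Abel summation against the partial sums from Corollary~\ref{c 2.2} is a nice conceptual addition, but the argument itself matches the paper's.
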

\begin{proof}
1. Base case: verify true for $n=p$. 
\begin{equation*}
\begin{split}
&\binom{p+m+1}{m+1}\sum_{i=2+m}^{p+m+1}{\frac{1}{i}}
-\binom{p+m}{m+1}\sum_{i=2+m}^{p+m}{\frac{1}{i}} \\ 
&\,\,\,\,=\binom{p+m+1}{m+1} \frac{1}{p+m+1}+\left[\binom{p+m+1}{m+1}-\binom{p+m}{m+1}\right]\sum_{i=2+m}^{p+m}{\frac{1}{i}}
.\end{split}
\end{equation*}
From the Pascal's Triangle identity, $\binom{p+m+1}{m+1}-\binom{p+m}{m+1}=\binom{p+m}{m}$. 
In addiction, 
$$
\binom{p+m+1}{m+1} \frac{1}{p+m+1}
=\binom{p+m}{m} \frac{1}{m+1}
.$$
Hence, substituting back, we get 
\begin{equation*}
\binom{p+m+1}{m+1}\sum_{i=2+m}^{p+m+1}{\frac{1}{i}}
-\binom{p+m}{m+1}\sum_{i=2+m}^{p+m}{\frac{1}{i}}
=\binom{p+m}{m}\sum_{i=1+m}^{p+m}{\frac{1}{i}}
=\sum_{N=p}^{p}{\binom{N+m}{m}\sum_{i=1+m}^{N+m}{\frac{1}{i}}}
.\end{equation*}
2. Induction hypothesis: assume the statement is true until $n$. 
$$
\sum_{N=p}^{n}{\binom{N+m}{m}\sum_{i=1+m}^{N+m}{\frac{1}{i}}}
=\binom{n+m+1}{m+1}\sum_{i=2+m}^{n+m+1}{\frac{1}{i}}
-\binom{p+m}{m+1}\sum_{i=2+m}^{p+m}{\frac{1}{i}}
.$$
3. Induction step: we will show that this statement is true for $(n+1)$. \\
We have to show the following statement to be true:  
$$
\sum_{N=p}^{n+1}{\binom{N+m}{m}\sum_{i=1+m}^{N+m}{\frac{1}{i}}}
=\binom{n+m+2}{m+1}\sum_{i=2+m}^{n+m+2}{\frac{1}{i}}
-\binom{p+m}{m+1}\sum_{i=2+m}^{p+m}{\frac{1}{i}}
.$$
$$ \\ $$ 
$$
\sum_{N=p}^{n+1}{\binom{N+m}{m}\sum_{i=1+m}^{N+m}{\frac{1}{i}}}
=\sum_{N=p}^{n}{\binom{N+m}{m}\sum_{i=1+m}^{N+m}{\frac{1}{i}}}
+\binom{n+m+1}{m}\sum_{i=1+m}^{n+m+1}{\frac{1}{i}}
.$$
From the induction hypothesis, 
\begin{equation*}
\begin{split}
\sum_{N=p}^{n+1}{\binom{N+m}{m}\sum_{i=1+m}^{N+m}{\frac{1}{i}}}
&=\binom{n+m+1}{m+1}\sum_{i=2+m}^{n+m+1}{\frac{1}{i}}
-\binom{p+m}{m+1}\sum_{i=2+m}^{p+m}{\frac{1}{i}}
+\binom{n+m+1}{m}\sum_{i=1+m}^{n+m+1}{\frac{1}{i}} \\
&=\left[\binom{n+m+1}{m+1}+\binom{n+m+1}{m}\right]\sum_{i=2+m}^{n+m+1}{\frac{1}{i}}
+\binom{n+m+1}{m}\frac{1}{m+1} \\ 
&\,\,\,\,\,\,-\binom{p+m}{m+1}\sum_{i=2+m}^{p+m}{\frac{1}{i}}
.\end{split}
\end{equation*}
From the Pascal's Triangle identity, $\binom{n+m+1}{m+1}+\binom{n+m+1}{m}=\binom{n+m+2}{m+1}$.
In addition, 
\begin{equation*}
\begin{split}
\resizebox{\linewidth}{!}{$\displaystyle{
\binom{n+m+1}{m}\frac{1}{m+1}
=\frac{(n+m+1)!}{m!(n+1)!}\frac{1}{m+1}
=\frac{(n+m+2)!}{(m+1)!(n+1)!}\frac{1}{n+m+2} 
=\binom{n+m+2}{m+1}\frac{1}{n+m+2}.}$}
\end{split}
\end{equation*}
Hence, 
\begin{equation*}
\begin{split}
\sum_{N=p}^{n+1}{\binom{N+m}{m}\sum_{i=1+m}^{N+m}{\frac{1}{i}}}
&=\binom{n+m+2}{m+1}\sum_{i=2+m}^{n+m+2}{\frac{1}{i}}
-\binom{p+m}{m+1}\sum_{i=2+m}^{p+m}{\frac{1}{i}}
.\end{split}
\end{equation*}
\end{proof}
\begin{corollary} \label{c 4.1}
For $p=1$, Theorem \ref{t 4.1} becomes 
$$
\sum_{N=1}^{n}{\binom{N+m}{m}\sum_{i=1+m}^{N+m}{\frac{1}{i}}}
=\binom{n+m+1}{m+1}\sum_{i=2+m}^{n+m+1}{\frac{1}{i}}
.$$
\end{corollary}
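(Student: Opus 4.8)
The plan is to obtain this corollary as an immediate specialization of Theorem \ref{t 4.1}, since the hypothesis $n \geq p$ with $p = 1$ holds for every $n \geq 1$. First I would substitute $p = 1$ into the right-hand side of Theorem \ref{t 4.1}, which produces
$$
\binom{n+m+1}{m+1}\sum_{i=2+m}^{n+m+1}{\frac{1}{i}}
-\binom{1+m}{m+1}\sum_{i=2+m}^{1+m}{\frac{1}{i}}.
$$
The entire task then reduces to showing that the subtracted boundary term vanishes.

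Next I would observe that the harmonic sum $\sum_{i=2+m}^{1+m}{\frac{1}{i}}$ is an empty sum: its upper index $1+m$ is strictly smaller than its lower index $2+m$, so by the standard convention it equals $0$. Equivalently, one can note that $\binom{1+m}{m+1}=\binom{m+1}{m+1}=1$, so that the boundary term is $1 \cdot 0 = 0$ regardless. Hence that term contributes nothing, and the right-hand side collapses to the claimed expression $\binom{n+m+1}{m+1}\sum_{i=2+m}^{n+m+1}{\frac{1}{i}}$, completing the argument.

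Since nothing beyond a substitution and the empty-sum convention is required, there is no genuine obstacle here; the corollary is purely a convenient restatement of Theorem \ref{t 4.1} in the common case where the outer summation begins at $1$. The single point worth flagging explicitly is the empty-sum convention, as this is precisely what causes the boundary term to disappear cleanly rather than forcing any cancellation identity or further computation.
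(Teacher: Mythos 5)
Your proof is correct and matches the paper's (implicit) argument exactly: the paper states this corollary as the immediate $p=1$ specialization of Theorem \ref{t 4.1}, with the boundary term $\binom{1+m}{m+1}\sum_{i=2+m}^{1+m}\frac{1}{i}$ vanishing by the empty-sum convention. Your explicit flagging of that convention is the only substantive step, and it is handled correctly.
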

\subsection{Repeated Binomial-Harmonic sum}
Now we generalize Theorem \ref{t 4.1} and develop a formula for simplifying repeated Binomial-Harmonic sums. 
\begin{theorem} \label{t 4.2}
For any $k \in \mathbb{N^*}$ and for any $m,p,n \in \mathbb{N}$ such that $n \geq p$, we have that 
\begin{equation*}
\begin{split}
&\sum_{N_{k}=p}^{n}{\cdots \sum_{N_{1}=p}^{N_2}{\left[\binom{N_1+m}{m} \sum_{i=1+m}^{N_1+m}{\frac{1}{i}}\right]}} \\
&\,\,\,\,=\binom{n+m+k}{m+k}\sum_{i=1+m+k}^{n+m+k}{\frac{1}{i}}-\sum_{j=0}^{k-1}{\binom{n-p+j}{j}\binom{p-1+m+k-j}{m+k-j} \sum_{i=1+m+k-j}^{p-1+m+k-j}{\frac{1}{i}}}
.\end{split}
\end{equation*}
\end{theorem}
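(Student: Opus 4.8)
The plan is to induct on the depth $k$ of the repeated sum, using Theorem~\ref{t 4.1} both as the base case and as the engine of the induction step. For the base case $k=1$ the left-hand side collapses to the single binomial-harmonic sum $\sum_{N=p}^{n}\binom{N+m}{m}\sum_{i=1+m}^{N+m}\frac1i$, which Theorem~\ref{t 4.1} evaluates directly, while on the right-hand side the sum over $j$ reduces to its lone $j=0$ term $\binom{p-1+m+1}{m+1}\sum_{i=2+m}^{p+m}\frac1i=\binom{p+m}{m+1}\sum_{i=2+m}^{p+m}\frac1i$; the two sides then agree verbatim with Theorem~\ref{t 4.1}.

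For the induction step I would peel off the outermost summation index $N_{k+1}$ and apply the induction hypothesis to the inner depth-$k$ sum, with $n$ replaced by $N_{k+1}$. This splits the expression into two pieces. The first piece is $\sum_{N_{k+1}=p}^{n}\binom{N_{k+1}+m+k}{m+k}\sum_{i=1+m+k}^{N_{k+1}+m+k}\frac1i$, which is precisely the left-hand side of Theorem~\ref{t 4.1} after replacing $m$ by $m+k$; applying that theorem yields the required leading term $\binom{n+m+k+1}{m+k+1}\sum_{i=2+m+k}^{n+m+k+1}\frac1i$ together with a boundary correction $-\binom{p+m+k}{m+k+1}\sum_{i=2+m+k}^{p+m+k}\frac1i$.

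The second piece is the double sum $-\sum_{N_{k+1}=p}^{n}\sum_{j=0}^{k-1}\binom{N_{k+1}-p+j}{j}\binom{p-1+m+k-j}{m+k-j}\sum_{i=1+m+k-j}^{p-1+m+k-j}\frac1i$. Since only the factor $\binom{N_{k+1}-p+j}{j}$ carries the variable $N_{k+1}$, I would interchange the two summations, pull out the $N_{k+1}$-independent factors, and evaluate the inner sum as $\sum_{N_{k+1}=p}^{n}\binom{N_{k+1}-p+j}{j}=\binom{n-p+j+1}{j+1}$ via Corollary~\ref{c 2.2} (the shifted binomial sum, using $\binom{j}{j+1}=0$). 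This turns the second piece into $-\sum_{j=0}^{k-1}\binom{n-p+j+1}{j+1}\binom{p-1+m+k-j}{m+k-j}\sum_{i=1+m+k-j}^{p-1+m+k-j}\frac1i$.

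The final and most delicate step is the bookkeeping that reassembles these contributions into the claimed closed form for depth $k+1$. After the index shift $j\mapsto j-1$, the summed second piece becomes $-\sum_{j=1}^{k}\binom{n-p+j}{j}\binom{p+m+k-j}{m+k+1-j}\sum_{i=2+m+k-j}^{p+m+k-j}\frac1i$, which is exactly the target subtracted sum $\sum_{j=0}^{k}(\cdots)$ for depth $k+1$ missing only its $j=0$ term; that missing term is $\binom{p+m+k}{m+k+1}\sum_{i=2+m+k}^{p+m+k}\frac1i$, and it coincides precisely with the boundary correction left over from the application of Theorem~\ref{t 4.1}. Verifying that the shifted indices align — namely $p-1+m+(k+1)-j=p+m+k-j$ and that the harmonic-sum limits $1+m+(k+1)-j=2+m+k-j$ match after the shift — is where the bulk of the care is needed, but once these ranges are checked the boundary correction fills the $j=0$ slot and the induction closes.
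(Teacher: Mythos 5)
Your proposal is correct and follows essentially the same route as the paper's proof: induction on $k$ with Theorem~\ref{t 4.1} as both base case and engine, interchange of summations, evaluation of $\sum_{N_{k+1}=p}^{n}\binom{N_{k+1}-p+j}{j}$ as $\binom{n-p+j+1}{j+1}$, and the reindexing $j\mapsto j+1$ with the Theorem~\ref{t 4.1} boundary term absorbed as the $j=0$ slot. The only cosmetic difference is that you invoke Corollary~\ref{c 2.2} where the paper cites Theorem~\ref{t 2.1} for the shifted binomial sum, which is equivalent.
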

\begin{proof}
1. Base case: verify true for $k=1$. \\ 
For $k=1$ this theorem will reduce to Theorem \ref{t 4.1}. Hence, the case for $k=1$ is proven. \\ 
2. Induction hypothesis: assume the statement is true until $k$.  
\begin{equation*}
\begin{split}
&\sum_{N_{k}=p}^{n}{\cdots \sum_{N_{1}=p}^{N_2}{\left[\binom{N_1+m}{m} \sum_{i=1+m}^{N_1+m}{\frac{1}{i}}\right]}} \\
&\,\,\,\,=\binom{n+m+k}{m+k}\sum_{i=1+m+k}^{n+m+k}{\frac{1}{i}}-\sum_{j=0}^{k-1}{\binom{n-p+j}{j}\binom{p-1+m+k-j}{m+k-j} \sum_{i=1+m+k-j}^{p-1+m+k-j}{\frac{1}{i}}}
.\end{split}
\end{equation*}
3. Induction step: we will show that this statement is true for $(k+1)$. \\ 
We have to show the following statement to be true:  
\begin{equation*}
\begin{split}
&\sum_{N_{k+1}=p}^{n}{\cdots \sum_{N_{1}=p}^{N_2}{\left[\binom{N_1+m}{m} \sum_{i=1+m}^{N_1+m}{\frac{1}{i}}\right]}} \\
&\,\,\,\,=\binom{n+m+k+1}{m+k+1}\sum_{i=2+m+k}^{n+m+k+1}{\frac{1}{i}}-\sum_{j=0}^{k}{\binom{n-p+j}{j}\binom{p+m+k-j}{m+k-j+1} \sum_{i=2+m+k-j}^{p+m+k-j}{\frac{1}{i}}}
.\end{split}
\end{equation*}
Let us denote the first term of the equality as B. 
\begin{equation*}
B=\sum_{N_{k+1}=p}^{n}{\left \{\sum_{N_{k}=p}^{N_{k+1}}{\cdots \sum_{N_{1}=p}^{N_2}{\left[\binom{N_1+m}{m} \sum_{i=1+m}^{N_1+m}{\frac{1}{i}}\right]}}\right \}}
.\end{equation*}
Applying the induction hypothesis, we get 
\begin{equation*}
\begin{split}
B
&=\sum_{N_{k+1}=p}^{n}{\binom{N_{k+1}+m+k}{m+k}\sum_{i=1+m+k}^{N_{k+1}+m+k}{\frac{1}{i}}} \\
&-\sum_{N_{k+1}=p}^{n}{\sum_{j=0}^{k-1}{\binom{N_{k+1}-p+j}{j}\binom{p-1+m+k-j}{m+k-j} \sum_{i=1+m+k-j}^{p-1+m+k-j}{\frac{1}{i}}}} \\
&=\sum_{N_{k+1}=p}^{n}{\binom{N_{k+1}+m+k}{m+k}\sum_{i=1+m+k}^{N_{k+1}+m+k}{\frac{1}{i}}} \\
&-\sum_{j=0}^{k-1}{\left \{\sum_{N_{k+1}=p}^{n}{\binom{N_{k+1}-p+j}{j}}\right \}\binom{p-1+m+k-j}{m+k-j} \sum_{i=1+m+k-j}^{p-1+m+k-j}{\frac{1}{i}}}
.\end{split}
\end{equation*}
Applying Theorem \ref{t 4.1} to the first term and applying Theorem \ref{t 2.1} to the term in curly brackets,    
\begin{equation*}
\begin{split}
B
&=\binom{n+m+k+1}{m+k+1}\sum_{i=2+m+k}^{n+m+k+1}{\frac{1}{i}}
-\binom{p+m+k}{m+k+1}\sum_{i=2+m+k}^{p+m+k}{\frac{1}{i}} \\
&-\sum_{j=0}^{k-1}{\binom{n-p+j+1}{j+1}\binom{p-1+m+k-j}{m+k-j} \sum_{i=1+m+k-j}^{p-1+m+k-j}{\frac{1}{i}}} \\
&=\binom{n+m+k+1}{m+k+1}\sum_{i=2+m+k}^{n+m+k+1}{\frac{1}{i}}
-\binom{p+m+k}{m+k+1}\sum_{i=2+m+k}^{p+m+k}{\frac{1}{i}} \\
&-\sum_{j=1}^{k}{\binom{n-p+j}{j}\binom{p+m+k-j}{m+k-j+1} \sum_{i=2+m+k-j}^{p+m+k-j}{\frac{1}{i}}} \\
&=\binom{n+m+k+1}{m+k+1}\sum_{i=2+m+k}^{n+m+k+1}{\frac{1}{i}}
-\sum_{j=0}^{k}{\binom{n-p+j+1}{j+1}\binom{p-1+m+k-j}{m+k-j} \sum_{i=1+m+k-j}^{p-1+m+k-j}{\frac{1}{i}}}
.\end{split}
\end{equation*}
Hence, the theorem holds true for $(k+1)$. Thus, the theorem is proven by induction. 
\end{proof}
\begin{corollary} \label{c 4.2}
For $p=1$, Theorem \ref{t 4.2} becomes 
$$
\sum_{N_{k}=1}^{n}{\cdots \sum_{N_{1}=1}^{N_2}{\left[\binom{N_1+m}{m} \sum_{i=1+m}^{N_1+m}{\frac{1}{i}}\right]}}
=\binom{n+m+k}{m+k}\sum_{i=1+m+k}^{n+m+k}{\frac{1}{i}}
.$$
\end{corollary}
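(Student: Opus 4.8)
The plan is to obtain this corollary as the direct specialization of Theorem \ref{t 4.2} at $p=1$, showing that the entire correction sum collapses. First I would substitute $p=1$ into the statement of Theorem \ref{t 4.2}. The leading term $\binom{n+m+k}{m+k}\sum_{i=1+m+k}^{n+m+k}\frac{1}{i}$ contains no occurrence of $p$, so it is carried over unchanged and already matches the desired right-hand side. Thus everything reduces to verifying that the subtracted sum $\sum_{j=0}^{k-1}\binom{n-p+j}{j}\binom{p-1+m+k-j}{m+k-j}\sum_{i=1+m+k-j}^{p-1+m+k-j}\frac{1}{i}$ vanishes when $p=1$.

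The crucial step I would isolate is the behaviour of the inner harmonic sum. For $p=1$ its upper limit becomes $p-1+m+k-j=m+k-j$ while its lower limit is $1+m+k-j=m+k-j+1$; since the lower index exceeds the upper index by exactly one, each such harmonic sum is empty and hence equals $0$. I would emphasize that this holds uniformly for every $j$ in the range $0\le j\le k-1$, so the correction sum is a sum of terms each multiplied by zero, independent of the values taken by the binomial factors $\binom{n-1+j}{j}$ and $\binom{m+k-j}{m+k-j}=1$.

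Having shown the subtracted sum is identically zero, only the leading term remains, which is precisely the claimed identity, completing the derivation. I do not anticipate any genuine obstacle here: the only point requiring care is the correct application of the empty-sum convention (a sum whose lower limit exceeds its upper limit is zero), exactly the same mechanism that produces Corollary \ref{c 4.1} as the $k=1$, $p=1$ instance of this family.
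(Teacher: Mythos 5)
Your proposal is correct and matches the paper's (implicit) reasoning: the paper states Corollary \ref{c 4.2} without proof as a direct specialization of Theorem \ref{t 4.2} at $p=1$, where each inner harmonic sum $\sum_{i=1+m+k-j}^{m+k-j}\frac{1}{i}$ in the correction term is empty and hence zero. Your identification of the empty-sum convention as the key step is exactly the intended mechanism, and your check that the leading term is $p$-independent completes the argument.
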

A particular case of Corollary \ref{c 4.2} which is of major interest is the repeated harmonic sum. 
\begin{corollary} \label{c 4.3}
The repeated harmonic sum of order $(m+1)$ is related to the shifted harmonic sum of order $1$ by the following relation, 
$$
\sum_{N_{m+1}=1}^{n}{\cdots \sum_{N_2=1}^{N_3}{\sum_{N_1=1}^{N_2}{\frac{1}{N_1}}}}
=\binom{n+m}{m}\sum_{N=1+m}^{n+m}{\frac{1}{N}}
.$$
\end{corollary}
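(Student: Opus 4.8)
The plan is to obtain this identity as a direct specialization of Corollary \ref{c 4.2}, with no fresh induction required. First I would set the parameter $m$ appearing in Corollary \ref{c 4.2} equal to $0$. Since $\binom{N_1+0}{0}=\binom{N_1}{0}=1$ and the inner harmonic sum $\sum_{i=1+0}^{N_1+0}{\frac{1}{i}}$ collapses to $\sum_{i=1}^{N_1}{\frac{1}{i}}$, the bracketed summand reduces to the plain harmonic sum, and Corollary \ref{c 4.2} becomes
$$
\sum_{N_{k}=1}^{n}{\cdots \sum_{N_1=1}^{N_2}{\sum_{i=1}^{N_1}{\frac{1}{i}}}}
=\binom{n+k}{k}\sum_{i=1+k}^{n+k}{\frac{1}{i}}
.$$

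The key observation is that the innermost harmonic sum is itself one further layer of summation: $\sum_{i=1}^{N_1}{\frac{1}{i}}$ runs a variable $i$ from $1$ to $N_1$ with summand $\frac{1}{i}$. Hence the left-hand side, which carries $k$ nested summations over $N_1,\dots,N_k$, is in fact a repeated sum of order $k+1$ whose innermost summand is $\frac{1}{i}$. I would then specialize to $k=m$ and relabel indices, sending the innermost variable $i\mapsto N_1$ and shifting $N_j\mapsto N_{j+1}$ for $1\le j\le m$. Under this relabeling the left-hand side becomes precisely the repeated harmonic sum of order $m+1$ in the statement, while the right-hand side $\binom{n+m}{m}\sum_{i=1+m}^{n+m}{\frac{1}{i}}$ becomes $\binom{n+m}{m}\sum_{N=1+m}^{n+m}{\frac{1}{N}}$ after renaming $i\mapsto N$. (The degenerate case $m=0$, which would demand $k=0$ outside the range of Corollary \ref{c 4.2}, is immediate since both sides reduce to $\sum_{N=1}^{n}{\frac{1}{N}}$.)

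The main obstacle here is purely bookkeeping rather than analytic: one must correctly recognize that the harmonic sum produced by the $m=0$ case supplies exactly the extra summation level that promotes an order-$k$ repeated sum into an order-$(m+1)$ repeated harmonic sum, and then verify that the index shift $i\mapsto N_1$, $N_j\mapsto N_{j+1}$ aligns all the summation bounds. Once this alignment is checked, the identity follows immediately from Corollary \ref{c 4.2}.
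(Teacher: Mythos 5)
Your proposal is correct and follows exactly the paper's own route: the paper also proves this corollary by applying Corollary \ref{c 4.2} with $m=0$, and your write-up simply makes explicit the bookkeeping (the inner harmonic sum supplying the extra summation level, and the relabeling $k\mapsto m$, $i\mapsto N_1$, $N_j\mapsto N_{j+1}$) that the paper leaves implicit.
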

\begin{proof}
Applying Corollary \ref{c 4.2} with $m=0$, we get this corollary. 
\end{proof}
\section{Application to repeated sums}\label{Application to the Calculation of Repeated Sums}

\subsection{Reduction of a repeated sum}
Let us denote the repeated sum of order $m$ of the sequence $a_N$  with lower and upper bounds respectively $q$ and $n$ as $S_{m,q,n}(a_N)$. For simplicity, we will denote it as  $S_{m,q,n}$. 
$$
S_{m,q,n}=\sum_{N_m=q}^{n}{\sum_{N_{m-1}=q}^{N_m}{\cdots \sum_{N_1=q}^{N_2}{a_{N_1}}}}.
$$
Recursively, it can be defined by:
\begin{align*}
\begin{cases}
S_{m,q,n}=\sum_{N_m=q}^{n}{S_{m-1,q,N_m}},\\
S_{0,q,n}\,\,=a_n \,\,\,\, \forall q,n \in \mathbb{N}.
\end{cases}
\end{align*}
\subsubsection{Variation Formula}
In this section, we will present formulas to express the variation of a repeated sum of order $m$ ($S_{m,q,n+1}-S_{m,q,n}$) in terms of lower order repeated sums. Equivalently, these formulas can be used to express $S_{m,q,n+1}$ in terms of $S_{m,q,n}$ and lower order repeated sums. 

We begin by presenting the variation formula that allows $S_{m,q,n+1}$ to be expressed in terms of $S_{m,q,n}$ and of repeated sums of order going from $0$ to $(m-1)$. 
\begin{theorem}
For any $n,q\in \mathbb{N}$ such that $n\geq q$, and for any $m\in\mathbb{N^*}$, we have 
$$
\sum_{N_m=q}^{n+1}{\sum_{N_{m-1}=q}^{N_m}{\cdots \sum_{N_1=q}^{N_2}{a_{N_1}}}}
=\sum_{k=1}^{m}{\left( \sum_{N_k=q}^{n}{\sum_{N_{k-1}=q}^{N_k}{\cdots \sum_{N_1=q}^{N_2}{a_{N_1}}}}\right)}+a_{n+1}.
$$
Or using the notation, 
$$
S_{m,q,n+1}
=\sum_{k=1}^{m}{S_{k,q,n}}+S_{0,q,n+1}.
$$
\end{theorem}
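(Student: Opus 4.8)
The statement expresses the variation of a repeated sum, $S_{m,q,n+1}$, as a sum of lower-order repeated sums evaluated at $n$ plus the single new term $a_{n+1}$. The natural approach is induction on $m$, using the recursive definition $S_{m,q,n}=\sum_{N_m=q}^{n}S_{m-1,q,N_m}$ as the engine.

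\textbf{Base case.} For $m=1$, the statement reads $S_{1,q,n+1}=S_{1,q,n}+a_{n+1}$, which is immediate from $S_{1,q,n+1}=\sum_{N_1=q}^{n+1}a_{N_1}=\left(\sum_{N_1=q}^{n}a_{N_1}\right)+a_{n+1}=S_{1,q,n}+a_{n+1}$, noting the single term on the right-hand side of the theorem (the $k=1$ summand) is exactly $S_{1,q,n}$ and $S_{0,q,n+1}=a_{n+1}$.

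\textbf{Induction step.} Assuming the formula holds for $m$, I would start from $S_{m+1,q,n+1}=\sum_{N_{m+1}=q}^{n+1}S_{m,q,N_{m+1}}$ and peel off the top term of the outer sum:
$$
S_{m+1,q,n+1}=\sum_{N_{m+1}=q}^{n}S_{m,q,N_{m+1}}+S_{m,q,n+1}=S_{m+1,q,n}+S_{m,q,n+1}.
$$
Now I would substitute the induction hypothesis for $S_{m,q,n+1}=\sum_{k=1}^{m}S_{k,q,n}+S_{0,q,n+1}$, giving
$$
S_{m+1,q,n+1}=S_{m+1,q,n}+\sum_{k=1}^{m}S_{k,q,n}+S_{0,q,n+1}=\sum_{k=1}^{m+1}S_{k,q,n}+S_{0,q,n+1},
$$
which is exactly the claimed identity for $m+1$. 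This closes the induction.

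\textbf{Main obstacle.} There is no serious analytic difficulty here; the proof is essentially a telescoping bookkeeping argument. The one place requiring care is the clean splitting $S_{m+1,q,n+1}=S_{m+1,q,n}+S_{m,q,n+1}$, which simply isolates the last term $N_{m+1}=n+1$ of the outermost summation and recognizes the remaining sum as $S_{m+1,q,n}$ by the recursive definition. The only thing to verify is that the index shift from $m$ to $m+1$ in the summation range $\sum_{k=1}^{m}\mapsto\sum_{k=1}^{m+1}$ absorbs the extra $S_{m+1,q,n}$ term exactly, which it does. I would state the theorem's proof almost entirely in the $S$-notation rather than in nested-sum notation, since the recursion does all the work and the explicit multi-sum expression only obscures the telescoping structure.
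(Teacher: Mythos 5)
Your proof is correct, but it takes a genuinely different route from the paper. The paper does not argue by induction at all: it invokes a more general variation formula for \emph{recurrent} sums established in the author's earlier work \cite{RecurrentSums}, namely an identity for $\sum_{N_m=q}^{n+1}\cdots\sum_{N_1=q}^{N_2}a_{(m);N_m}\cdots a_{(1);N_1}$, and then specializes it by setting $a_{(m);N}=\cdots=a_{(2);N}=1$ and $a_{(1);N}=a_N$. Your argument instead runs a self-contained induction on $m$, with the key step being the splitting $S_{m+1,q,n+1}=S_{m+1,q,n}+S_{m,q,n+1}$ obtained by peeling off the $N_{m+1}=n+1$ term of the outermost sum and then applying the inductive hypothesis to $S_{m,q,n+1}$; both the base case and the telescoping bookkeeping check out. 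What your approach buys is independence from the external reference — the theorem is proved from the recursive definition of $S_{m,q,n}$ alone, which is arguably preferable in a paper whose stated aim is to develop repeated sums as their own object. What the paper's approach buys is brevity and a visible link to the general recurrent-sum framework, from which this statement (and the subsequent theorem with the truncated range $k=p+1,\dots,m$) both fall out as one-line specializations; note that your induction would need a small additional twist (inducting down to level $p$ rather than level $0$) to recover that second, more general variation formula.
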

\begin{proof}
In \cite{RecurrentSums}, the author presented the following variation formula:
$$\sum_{N_m=q}^{n+1}{\cdots \sum_{N_1=q}^{N_2}{a_{(m);N_m}\cdots a_{(1);N_1}}}
=\sum_{k=0}^{m}{\left(\prod_{j=0}^{m-k-1}{a_{(m-j);n+1}}\right)\left(\sum_{N_k=q}^{n}{\cdots \sum_{N_1=q}^{N_2}{a_{(k);N_k}\cdots a_{(1);N_1}}}\right)}.$$
Setting $a_{(m);N}=\cdots = a_{(2);N}=1$ and $a_{(1);N}=a_{N}$, we obtain this theorem. 
\end{proof}
The variation of a repeated sum can also be expressed in terms of only a certain range of lower order repeated sums. In other words, $S_{m,q,n+1}$ can be expressed in terms of $S_{m,q,n}$ and of repeated sums of order going only from $p$ to $(m-1)$. To do so, we give the following theorem.  
\begin{theorem}
For any $n,q\in \mathbb{N}$ such that $n\geq q$, and for any $m, p\in\mathbb{N^*}$ such that $p \leq m$, we have 
$$
\sum_{N_m=q}^{n+1}{\sum_{N_{m-1}=q}^{N_m}{\cdots \sum_{N_1=q}^{N_2}{a_{N_1}}}}
=\sum_{k=p+1}^{m}{\left( \sum_{N_k=q}^{n}{\sum_{N_{k-1}=q}^{N_k}{\cdots \sum_{N_1=q}^{N_2}{a_{N_1}}}}\right)}
+\sum_{N_p=q}^{n+1}{\sum_{N_{p-1}=q}^{N_p}{\cdots \sum_{N_1=q}^{N_2}{a_{N_1}}}}
.$$
Or using the notation, 
$$
S_{m,q,n+1}
=\sum_{k=p+1}^{m}{S_{k,q,n}}
+S_{p,q,n+1}
.$$
\end{theorem}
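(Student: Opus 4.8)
The plan is to derive this identity from the immediately preceding theorem, which already expresses $S_{m,q,n+1}$ as $\sum_{k=1}^{m} S_{k,q,n} + S_{0,q,n+1}$. The only new ingredient is that the same identity applies not just at order $m$ but at any lower order; applying it at order $p$ lets me fold the bottom portion of the sum back into a single repeated sum of order $p$.

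Concretely, first I would invoke the preceding theorem at order $m$ to write
$$
S_{m,q,n+1} = \sum_{k=1}^{m} S_{k,q,n} + S_{0,q,n+1}.
$$
Next I would split the index range $1 \le k \le m$ at $p$, which is legitimate since $1 \le p \le m$, giving $\sum_{k=1}^{m} S_{k,q,n} = \sum_{k=1}^{p} S_{k,q,n} + \sum_{k=p+1}^{m} S_{k,q,n}$. Then I would apply the preceding theorem a second time, now at order $p$ (valid since $p \in \mathbb{N^*}$ and $n \geq q$), to recognize that
$$
\sum_{k=1}^{p} S_{k,q,n} + S_{0,q,n+1} = S_{p,q,n+1}.
$$
Substituting this back collapses the low-order terms into $S_{p,q,n+1}$ and yields exactly
$$
S_{m,q,n+1} = \sum_{k=p+1}^{m} S_{k,q,n} + S_{p,q,n+1},
$$
which is the claim.

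An alternative, self-contained route avoids the preceding theorem and works directly from the recursive definition $S_{m,q,n} = \sum_{N_m=q}^{n} S_{m-1,q,N_m}$. Peeling off the top term of the outer sum gives the one-step recurrence $S_{m,q,n+1} = S_{m,q,n} + S_{m-1,q,n+1}$, which is precisely the case $p = m-1$. Iterating this recurrence on the last summand, or equivalently running a downward induction on $p$ with trivial base case $p = m$ (empty sum) and inductive step supplied by applying the one-step recurrence to $S_{p,q,n+1}$, telescopes into the stated formula.

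The computation is essentially bookkeeping, so I expect no deep obstacle. The only points requiring care are the boundary case $p = m$, where the sum $\sum_{k=p+1}^{m}$ is empty and the identity degenerates to a tautology, and the verification that the hypotheses of the preceding theorem (namely $p \in \mathbb{N^*}$ and $n \geq q$) are genuinely met when it is reapplied at the lower order $p$.
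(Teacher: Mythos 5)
Your proof is correct, but it takes a genuinely different route from the paper. The paper does not argue from the recursive structure at all: it imports a general variation formula for \emph{recurrent} sums from the author's earlier work \cite{RecurrentSums}, namely an identity for $\sum_{N_m=q}^{n+1}\cdots\sum_{N_1=q}^{N_2}a_{(m);N_m}\cdots a_{(1);N_1}$ involving products of the sequences evaluated at $n+1$, and then specializes $a_{(m);N}=\cdots=a_{(2);N}=1$, $a_{(1);N}=a_N$ to collapse it to the stated formula. Your first route instead deduces the result from the preceding theorem of this paper by subtracting its instance at order $p$ from its instance at order $m$ (equivalently, splitting $\sum_{k=1}^m S_{k,q,n}$ at $p$ and recombining the low-order block with $S_{0,q,n+1}$ into $S_{p,q,n+1}$), and your second route is fully self-contained: peeling the top term of the outer sum in the recursive definition gives $S_{m,q,n+1}=S_{m,q,n}+S_{m-1,q,n+1}$, and telescoping this down to level $p$ gives the claim. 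Both of your arguments are valid, and the boundary observations you flag ($p=m$ giving an empty sum, and the hypotheses $p\in\mathbb{N}^*$, $n\ge q$ being inherited at the lower order) are exactly the right points of care. What the paper's approach buys is uniformity with the rest of Section 5, where everything is derived by specializing the recurrent-sum machinery of \cite{RecurrentSums}; what yours buys is independence from that external reference and a more transparent explanation of why the identity holds, since the one-step recurrence $S_{m,q,n+1}=S_{m,q,n}+S_{m-1,q,n+1}$ is visibly the whole content of the theorem.
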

\begin{proof}
In \cite{RecurrentSums}, the author presented the following variation formula:
\begin{equation*}
\begin{split}
\sum_{N_m=q}^{n+1}{\cdots \sum_{N_1=q}^{N_2}{a_{(m);N_m}\cdots a_{(1);N_1}}}
=&\sum_{k=p+1}^{m}{\left(\prod_{j=0}^{m-k-1}{a_{(m-j);n+1}}\right)\left(\sum_{N_k=q}^{n}{\cdots \sum_{N_1=q}^{N_2}{a_{(k);N_k}\cdots a_{(1);N_1}}}\right)} \\
&+\left(\prod_{j=0}^{m-p-1}{a_{(m-j);n+1}}\right)\left(\sum_{N_p=q}^{n+1}{\cdots \sum_{N_1=q}^{N_2}{a_{(p);N_p}\cdots a_{(1);N_1}}}\right).
\end{split}
\end{equation*}
Setting $a_{(m);N}=\cdots = a_{(2);N}=1$ and $a_{(1);N}=a_{N}$, we obtain this theorem. 
\end{proof}
\subsubsection{Reduction Formula}
Now that we have proven the needed binomial coefficient definitions and identities, we derive a formula to simplify the repeated sum of any sequence $a_N$.  
\begin{theorem} \label{t 5.1}
For any $m\in \mathbb{N^*}$, for any $q,n \in \mathbb{N}$ such that $n \geq q$ and for any sequence $a_N$ defined in the interval $[q,n]$, we have that 
$$
\sum_{N_m=q}^{n}{\cdots \sum_{N_2=q}^{N_3}{\sum_{N_1=q}^{N_2}{a_{N_1}}}}
=\sum_{i=q}^{n}{\binom{n+(m-1)-i}{m-1}a_{i}}
.$$
\end{theorem}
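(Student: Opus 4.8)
The plan is to induct on the order $m$, using the recursive definition $S_{m,q,n}=\sum_{N_m=q}^{n}S_{m-1,q,N_m}$ together with the shifted binomial sum of Corollary \ref{c 2.2}. For the base case $m=1$ the left-hand side is $\sum_{N_1=q}^{n}a_{N_1}$ and the right-hand side is $\sum_{i=q}^{n}\binom{n-i}{0}a_{i}$; since $\binom{n-i}{0}=1$ whenever $n\geq i$, the two coincide.

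For the induction step, I would assume the formula at order $m$ and substitute it into the recursive definition to obtain
$$
\sum_{N_{m+1}=q}^{n}{\cdots \sum_{N_1=q}^{N_2}{a_{N_1}}}
=\sum_{N=q}^{n}{\left(\sum_{i=q}^{N}{\binom{N+(m-1)-i}{m-1}a_{i}}\right)}.
$$
The key move is to interchange the two summations: for a fixed $i$ between $q$ and $n$, the outer index $N$ ranges from $i$ to $n$, so the expression becomes $\sum_{i=q}^{n}a_{i}\sum_{N=i}^{n}\binom{N+(m-1)-i}{m-1}$. After the reindexing $N\mapsto N+i$, the inner sum is $\sum_{N=0}^{n-i}\binom{N+(m-1)}{m-1}$, which Corollary \ref{c 2.2} (taking the shift and the lower binomial index both equal to $m-1$, and the upper limit equal to $n-i$) evaluates to $\binom{n-i+m}{m}-\binom{m-1}{m}$. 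Since $\binom{m-1}{m}=0$, the coefficient of $a_{i}$ reduces to $\binom{n+m-i}{m}=\binom{n+((m+1)-1)-i}{(m+1)-1}$, which is exactly the claimed coefficient at order $m+1$, closing the induction.

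As an intuition check and an alternative route, one can argue directly: because $a_{N_1}$ depends only on $N_1$, the coefficient of $a_{i}$ in the repeated sum is the number of chains $i=N_1\leq N_2\leq\cdots\leq N_m\leq n$, i.e.\ a repeated sum of ones of order $m-1$ with effective lower bound $i$ and upper bound $n$, which Corollary \ref{c 2.4} evaluates to $\binom{(n-i)+(m-1)}{m-1}$. The only delicate point in either approach is the index bookkeeping, namely correctly tracking how the constraint $N_1=i$ raises the effective lower bound of the remaining sums to $i$ (combinatorial route) or correctly matching the parameters of Corollary \ref{c 2.2} and observing that the boundary term $\binom{m-1}{m}$ vanishes (inductive route); once these are handled, no genuine obstacle remains.
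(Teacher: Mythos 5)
Your proof is correct, but your primary route differs from the paper's. The paper does not induct on $m$: it invokes a sum-interchange theorem from the author's earlier work on recurrent sums (Theorem 3.3 of \cite{RecurrentSums}, specialized to $a_{(m);N_m}=\cdots=a_{(2);N_2}=1$) to pull $a_{N_1}$ to the outside in one step, leaving the coefficient of $a_{N_1}$ as an $(m-1)$-fold repeated sum of ones with lower bound $N_1$ and upper bound $n$, which Corollary \ref{c 2.4} evaluates to $\binom{n-N_1+m-1}{m-1}$. That is precisely your ``alternative route,'' except that you justify the interchange by the direct combinatorial count of chains $i=N_1\leq N_2\leq\cdots\leq N_m\leq n$ rather than by citation. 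Your main argument --- induction on $m$ via the recursive definition $S_{m+1,q,n}=\sum_{N=q}^{n}S_{m,q,N}$, a single two-sum interchange, and the telescoped binomial sum of Theorem \ref{t 2.1}/Corollary \ref{c 2.2} with the vanishing boundary term $\binom{m-1}{m}=0$ --- is a genuinely different and fully self-contained proof: it needs only results established in this paper, at the cost of being slightly longer. The index bookkeeping in your induction step checks out, so both of your routes are sound.
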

\begin{proof}
Applying Theorem 3.3 from \cite{RecurrentSums}, for $a_{(m);N_m}= \cdots = a_{(2);N_2}=1$ and $a_{(1);N_1}=a_{N_1}$, we get 
$$
\sum_{N_m=q}^{n}{\cdots \sum_{N_2=q}^{N_3}{\sum_{N_1=q}^{N_2}{a_{N_1}}}}
=\sum_{N_1=q}^{n}{a_{N_1}\sum_{N_m=N_1}^{n}{\cdots \sum_{N_3=N_1}^{N_4}{\sum_{N_2=N_1}^{N_3}{1}}}}
.$$
By applying Corollary \ref{c 2.4} to the $(m-1)$ inner sums, we have 
$$
\sum_{N_m=N_1}^{n}{\cdots \sum_{N_3=N_1}^{N_4}{\sum_{N_2=N_1}^{N_3}{1}}}
=\binom{n-N_1+m-1}{m-1}
.$$
Hence, substituting back, we get the desired theorem. 
\end{proof}
Theorem \ref{t 5.1} offers a more computationally efficient way of calculating repeated sums as computing the repeated sum directly requires adding up $\binom{n-q+m}{m}$ terms while using the simplified form requires adding up only $n-q+1$ terms. Also notice that the number of terms added in the simplified form is independent of the order $m$. For example, let us consider a repeated sum of order $m=10$ and with lower and upper bounds respectively $q=1$ and $n=10$: computing this repeated sum directly requires adding $92 378$ terms while using the theorem requires adding only $10$ terms. 
\begin{remark}
{\em From Theorem \ref{t 5.1}, we can deduce that a term $a_i$ appears $\binom{n+(m-1)-i}{m-1}$ times in the repeated sum of the sequence $a_N$. }
\end{remark}
\begin{corollary} \label{c 5.1}
If the sums start at $1$, Theorem \ref{t 5.1} becomes 
$$
\sum_{N_m=1}^{n}{\cdots \sum_{N_2=1}^{N_3}{\sum_{N_1=1}^{N_2}{a_{N_1}}}}
=\sum_{i=1}^{n}{\binom{n+(m-1)-i}{m-1}a_{i}}
.$$
\end{corollary}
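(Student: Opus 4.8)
The plan is to obtain Corollary~\ref{c 5.1} as the immediate specialization of Theorem~\ref{t 5.1} to the case $q=1$. Since Theorem~\ref{t 5.1} is asserted for \emph{every} pair $q,n \in \mathbb{N}$ with $n \geq q$ and for \emph{any} sequence $a_N$ defined on $[q,n]$, the entire task reduces to checking that the choice $q=1$ is admissible and then reading off the resulting identity.

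First I would confirm that the hypotheses transfer. Setting $q=1$ turns the standing requirement $n \geq q$ into $n \geq 1$, which is the only constraint needed, and the assumption that $a_N$ be defined on $[q,n]$ becomes the assumption that $a_N$ be defined on $[1,n]$. Both are harmless, so Theorem~\ref{t 5.1} applies verbatim with $q=1$. Substituting $q=1$ into the statement of Theorem~\ref{t 5.1}, the left-hand side becomes the $m$-fold repeated sum with every lower bound equal to $1$, namely $\sum_{N_m=1}^{n}\cdots\sum_{N_1=1}^{N_2} a_{N_1}$, while on the right-hand side the outer index $i$ now ranges from $1$ to $n$ and the binomial weight $\binom{n+(m-1)-i}{m-1}$ is carried over unchanged. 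This produces exactly the claimed identity $\sum_{i=1}^{n}\binom{n+(m-1)-i}{m-1}a_i$.

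The main obstacle is that there is essentially none: this is a direct substitution into an already-proved result, with no new computation required. For completeness one could instead give a self-contained argument by combining the reduction step (factoring $a_{N_1}$ out of the inner sums, as in the proof of Theorem~\ref{t 5.1}) with Corollary~\ref{c 2.4} evaluated at lower bound $1$, which counts the multiplicity $\binom{n-i+m-1}{m-1}$ of each term $a_i$; but since Theorem~\ref{t 5.1} is available, the one-line specialization is the cleanest route, and the brief proof in the paper takes exactly this form.
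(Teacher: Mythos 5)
Your proof is correct and takes exactly the route the paper intends: Corollary \ref{c 5.1} is simply Theorem \ref{t 5.1} specialized to $q=1$, with the hypotheses transferring trivially. Nothing further is needed.
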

\subsubsection{Application to harmonic sums and repeated harmonic sums}
In this section, we apply Theorem \ref{t 5.1} to develop some formulae for harmonic sums and repeated harmonic sums. 

First, we introduce a formula for simplifying repeated harmonic sums.  
\begin{theorem} \label{t 5.2}
For any $m,n \in \mathbb{N^*}$, we have that 
\begin{equation*}
\sum_{N_m=1}^{n}{\cdots \sum_{N_2=1}^{N_3}{\sum_{N_1=1}^{N_2}{\frac{1}{N_1}}}}
=\sum_{i=1}^{n}{\binom{n+(m-1)-i}{m-1}\frac{1}{i}}
=\binom{n+m-1}{m-1}\sum_{i=m}^{n+m-1}{\frac{1}{i}}
.\end{equation*}
\end{theorem}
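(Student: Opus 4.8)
The plan is to recognize that the theorem makes three expressions coincide, and that the leftmost one (the repeated harmonic sum of order $m$) has already been connected to each of the other two by results established earlier in the paper. So rather than manipulating sums from scratch, I would assemble the statement from Corollary \ref{c 5.1} and Corollary \ref{c 4.3}.

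For the first equality, I would apply Corollary \ref{c 5.1} directly to the sequence $a_{N_1}=\tfrac{1}{N_1}$. That corollary states $\sum_{N_m=1}^{n}\cdots\sum_{N_1=1}^{N_2}a_{N_1}=\sum_{i=1}^{n}\binom{n+(m-1)-i}{m-1}a_{i}$, so substituting $a_i=\tfrac1i$ yields the middle expression immediately. This step is a pure specialization and requires no additional computation.

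For the second equality, I would invoke Corollary \ref{c 4.3}, which reads $\sum_{N_{m+1}=1}^{n}\cdots\sum_{N_1=1}^{N_2}\tfrac{1}{N_1}=\binom{n+m}{m}\sum_{N=1+m}^{n+m}\tfrac{1}{N}$. Replacing the order parameter $m$ there by $m-1$, so that the left-hand side becomes a repeated sum of order $m$ (matching our theorem), the right-hand side becomes $\binom{n+m-1}{m-1}\sum_{N=m}^{n+m-1}\tfrac{1}{N}$, which is exactly the rightmost expression in the statement. Since the common left-hand side equals both the middle expression (by Corollary \ref{c 5.1}) and the rightmost expression (by Corollary \ref{c 4.3}), all three agree, which is the claim.

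The only real point of care, and the sole place an error could creep in, is the index bookkeeping: Corollary \ref{c 4.3} is phrased for order $(m+1)$, so one must shift its parameter to $m-1$ and verify that the inner harmonic sum's bounds transform as $1+(m-1)=m$ and $n+(m-1)=n+m-1$, and that the binomial coefficient becomes $\binom{n+m-1}{m-1}$. Beyond this alignment there is no genuine obstacle, since the inductive and combinatorial work was already carried out in proving Theorem \ref{t 5.1} and Theorem \ref{t 4.2}. As a remark, the equality of the middle and rightmost expressions is itself a nontrivial binomial-harmonic identity, but the proof here obtains it for free by routing both through the repeated harmonic sum rather than verifying it directly.
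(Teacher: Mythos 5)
Your proposal is correct and follows essentially the same route as the paper: the paper's proof likewise obtains the theorem by equating the expression from Corollary \ref{c 4.3} (with the order parameter shifted so the repeated sum has order $m$) with the expression from Theorem \ref{t 5.1} specialized to $a_N=\frac{1}{N}$ and $q=1$. Your index bookkeeping for the shift $m\mapsto m-1$ in Corollary \ref{c 4.3} is accurate.
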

\begin{proof}
By equating the expression of the repeated harmonic sum obtained from Corollary \ref{c 4.3} with that obtained from Theorem \ref{t 5.1} for $a_N=\frac{1}{N}$, we obtain this theorem. 
\end{proof}
Now we present identities related to the harmonic sum. We start with the following identity. 
\begin{theorem} \label{t 5.3}
For any $m,n \in \mathbb{N}$ such that $n \geq 1$, we have that 
\begin{equation*}
\sum_{i=1}^{n}{\frac{\binom{n}{i}}{\binom{n+m}{i}}\frac{1}{i}}
=\sum_{i=1}^{n}{\left[\prod_{k=0}^{i-1}{\frac{(n-k)}{(n+m-k)}}\right]\frac{1}{i}}
=\sum_{i=1+m}^{n+m}{\frac{1}{i}}
.\end{equation*}
\end{theorem}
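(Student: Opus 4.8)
The plan is to treat the two equalities in turn, using the repeated harmonic sum formula of Theorem \ref{t 5.2} as the engine for the right-hand identity.

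The first equality (binomial ratio equals the telescoping product) is pure algebra and holds termwise, so no summation is involved. Writing out the factorial definitions gives
$$
\frac{\binom{n}{i}}{\binom{n+m}{i}}
=\frac{n!\,(n+m-i)!}{(n+m)!\,(n-i)!},
$$
while expanding
$$
\prod_{k=0}^{i-1}{\frac{n-k}{n+m-k}}
=\frac{n(n-1)\cdots(n-i+1)}{(n+m)(n+m-1)\cdots(n+m-i+1)}
=\frac{n!/(n-i)!}{(n+m)!/(n+m-i)!}
$$
produces exactly the same quantity.

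For the second equality, the key observation is the termwise identity
$$
\frac{\binom{n}{i}}{\binom{n+m}{i}}
=\frac{\binom{n+m-i}{m}}{\binom{n+m}{m}},
$$
which I would verify either directly from the factorial forms (clearing denominators reduces both sides to $\frac{(n+m)!}{i!\,(n-i)!\,m!}$) or via the subset-of-a-subset identity $\binom{n}{i}\binom{n+m}{m}=\binom{n+m}{i}\binom{n+m-i}{m}$. Since $\binom{n+m}{m}$ is independent of $i$, pulling it out of the sum gives
$$
\sum_{i=1}^{n}{\frac{\binom{n}{i}}{\binom{n+m}{i}}\frac{1}{i}}
=\frac{1}{\binom{n+m}{m}}\sum_{i=1}^{n}{\binom{n+m-i}{m}\frac{1}{i}}.
$$
Now I would invoke Theorem \ref{t 5.2} with its order parameter $m$ replaced by $m+1$ (valid since $m+1\in\mathbb{N^*}$ whenever $m\in\mathbb{N}$), which states that $\sum_{i=1}^{n}{\binom{n+m-i}{m}\frac{1}{i}}=\binom{n+m}{m}\sum_{i=m+1}^{n+m}{\frac{1}{i}}$. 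Substituting this cancels the factor $\binom{n+m}{m}$ and leaves exactly $\sum_{i=1+m}^{n+m}{\frac{1}{i}}$, completing the proof.

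The only genuinely nontrivial step is recognizing the termwise identity that rewrites the ratio $\binom{n}{i}/\binom{n+m}{i}$ as $\binom{n+m-i}{m}/\binom{n+m}{m}$; this is precisely what converts the given sum into the combination appearing on the left of Theorem \ref{t 5.2}. Everything after that is cancellation, and the first equality is a routine telescoping of the product. I would also note the boundary case $m=0$, where both sides collapse to the ordinary harmonic sum $\sum_{i=1}^{n}1/i$; it poses no difficulty, since $m+1\geq 1$ keeps Theorem \ref{t 5.2} applicable in the general argument.
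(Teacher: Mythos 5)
Your proposal is correct and follows essentially the same route as the paper: both hinge on the termwise identity $\binom{n}{i}/\binom{n+m}{i}=\binom{n+m-i}{m}/\binom{n+m}{m}$ and then invoke Theorem \ref{t 5.2} with its order parameter shifted to $m+1$ to produce the shifted harmonic sum. You merely supply more explicit factorial verifications of the termwise identities than the paper does, which is a welcome addition but not a different argument.
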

\begin{proof}
From Theorem \ref{t 5.2} with $m$ substituted by $(m+1)$, 
\begin{equation*}
\sum_{i=1}^{n}{\binom{n+m-i}{m}\frac{1}{i}}
=\binom{n+m}{m}\sum_{i=1+m}^{n+m}{\frac{1}{i}}
.\end{equation*}
Knowing that 
\begin{equation*}
\frac{\binom{n+m-i}{m}}{\binom{n+m}{m}}
=\frac{\binom{n}{i}}{\binom{n+m}{i}}
=\prod_{k=0}^{i-1}{\frac{(n-k)}{(n+m-k)}}
,\end{equation*}
hence, substituting back, we obtain the theorem. 
\end{proof}
In order to prove the second identity, we need to first prove the following two lemmas. 
\begin{lemma} \label{l 5.1}
For any $k \in \mathbb{N}^*$, we have that 
\begin{equation*}
\sum_{i=1}^{2k}{\frac{1}{i}}
=(2k+1)\sum_{i=1}^{k}{\frac{1}{(2k+1-i)i}}
=\frac{(2k+1)}{2}\sum_{i=1}^{2k}{\frac{1}{(2k+1-i)i}}
.\end{equation*}
\end{lemma}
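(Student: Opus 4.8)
The plan is to prove both equalities directly from a single partial fraction decomposition together with the symmetry of the summand under $i \mapsto 2k+1-i$; no induction is needed here. The key algebraic fact I would establish first is
$$
\frac{1}{(2k+1-i)\,i}
=\frac{1}{2k+1}\left(\frac{1}{i}+\frac{1}{2k+1-i}\right),
$$
which follows immediately by placing the right-hand side over the common denominator $i(2k+1-i)$ and observing that the numerator collapses to $(2k+1-i)+i=2k+1$.

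First I would prove the outer equality, that is, that $\sum_{i=1}^{2k}\frac{1}{i}$ equals the rightmost expression. Applying the decomposition to every summand of $\sum_{i=1}^{2k}\frac{1}{(2k+1-i)i}$ and splitting gives
$$
\frac{2k+1}{2}\sum_{i=1}^{2k}\frac{1}{(2k+1-i)\,i}
=\frac{1}{2}\sum_{i=1}^{2k}\frac{1}{i}
+\frac{1}{2}\sum_{i=1}^{2k}\frac{1}{2k+1-i}.
$$
In the last sum I would reindex by $j=2k+1-i$, which is an involution of $\{1,\dots,2k\}$, so that $\sum_{i=1}^{2k}\frac{1}{2k+1-i}=\sum_{j=1}^{2k}\frac{1}{j}$. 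The two halves then coincide and add up to $\sum_{i=1}^{2k}\frac{1}{i}$, establishing the equality between the first and third expressions.

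Next I would connect the half-range sum to the full-range sum. Since the summand $\frac{1}{(2k+1-i)\,i}$ is invariant under $i\mapsto 2k+1-i$, and this map sends $\{1,\dots,k\}$ bijectively onto $\{k+1,\dots,2k\}$, splitting the full sum at $i=k$ and reindexing the upper block yields
$$
\sum_{i=1}^{2k}\frac{1}{(2k+1-i)\,i}
=2\sum_{i=1}^{k}\frac{1}{(2k+1-i)\,i}.
$$
Substituting this into the rightmost expression converts $\frac{2k+1}{2}\sum_{i=1}^{2k}$ into $(2k+1)\sum_{i=1}^{k}$, giving the middle expression and completing the chain of equalities. The computation is routine; the only point requiring care is the index bookkeeping in the two reindexings, namely verifying that $i\mapsto 2k+1-i$ is indeed an involution on $\{1,\dots,2k\}$ and carries the lower half onto the upper half, which is where I expect the one genuine subtlety to lie.
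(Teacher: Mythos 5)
Your proof is correct and uses essentially the same ingredients as the paper's: the identity $\frac{1}{i}+\frac{1}{2k+1-i}=\frac{2k+1}{(2k+1-i)i}$ and the reflection $i\mapsto 2k+1-i$ splitting the range at $k$. The paper merely runs the first step in the opposite direction (starting from $\sum_{i=1}^{2k}\frac{1}{i}$, splitting at $k$, and combining fractions to reach the middle expression), so the two arguments are the same in substance.
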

\begin{proof}
We split the sum, modify its parts, then recombine them to obtain, 
\begin{equation*}
\sum_{i=1}^{2k}{\frac{1}{i}}
=\sum_{i=1}^{k}{\frac{1}{i}}
+\sum_{i=k+1}^{2k}{\frac{1}{i}}
=\sum_{i=1}^{k}{\frac{1}{i}}
+\sum_{i=1}^{k}{\frac{1}{2k+1-i}}
=(2k+1)\sum_{i=1}^{k}{\frac{1}{(2k+1-i)i}}.
\end{equation*}
Similarly, 
\begin{equation*}
\begin{split}
\frac{(2k+1)}{2}\sum_{i=1}^{2k}{\frac{1}{(2k+1-i)i}}
&=\frac{(2k+1)}{2}\left[\sum_{i=1}^{k}{\frac{1}{(2k+1-i)i}}
+\sum_{i=k+1}^{2k}{\frac{1}{(2k+1-i)i}} \right] \\
&=\frac{(2k+1)}{2}\left[\sum_{i=1}^{k}{\frac{1}{(2k+1-i)i}}
+\sum_{i=1}^{k}{\frac{1}{(2k+1-i)i}} \right] \\
&=(2k+1)\sum_{i=1}^{k}{\frac{1}{(2k+1-i)i}}.
\end{split}
\end{equation*}
Equating, we obtain the lemma. 
\end{proof}
\begin{lemma} \label{l 5.2}
For any $k \in \mathbb{N}^*$, we have that 
\begin{equation*}
\sum_{i=1}^{2k-1}{\frac{1}{i}}
=(2k)\sum_{i=1}^{k}{\frac{1}{(2k-i)i}}-\frac{1}{k}
=(2k)\sum_{i=1}^{k-1}{\frac{1}{(2k-i)i}}+\frac{1}{k}
=(k)\sum_{i=1}^{2k-1}{\frac{1}{(2k-i)i}}
.\end{equation*}
\end{lemma}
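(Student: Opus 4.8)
The plan is to mirror the argument of Lemma \ref{l 5.1}, adapted from the even sum $\sum_{i=1}^{2k}1/i$ to the odd sum $\sum_{i=1}^{2k-1}1/i$. The single tool behind all three equalities is the reflection identity
$$
\frac{1}{i}+\frac{1}{2k-i}=\frac{2k}{i(2k-i)},
$$
equivalently $\frac{1}{(2k-i)i}=\frac{1}{2k}\bigl(\frac{1}{i}+\frac{1}{2k-i}\bigr)$, used together with the index change $i\mapsto 2k-i$, which reverses the range of summation.

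I would establish the third (fully symmetric) equality first. Applying $j=2k-i$ to the whole range sends $\{1,\dots,2k-1\}$ bijectively onto itself and gives $\sum_{i=1}^{2k-1}\frac{1}{2k-i}=\sum_{i=1}^{2k-1}\frac{1}{i}$. Summing the reflection identity over $i=1,\dots,2k-1$ then yields $(2k)\sum_{i=1}^{2k-1}\frac{1}{(2k-i)i}=2\sum_{i=1}^{2k-1}\frac{1}{i}$, that is $\sum_{i=1}^{2k-1}\frac{1}{i}=k\sum_{i=1}^{2k-1}\frac{1}{(2k-i)i}$, the last equality.

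For the first two equalities I would split at the midpoint, $\sum_{i=1}^{2k-1}\frac{1}{i}=\sum_{i=1}^{k}\frac{1}{i}+\sum_{i=k+1}^{2k-1}\frac{1}{i}$, and reflect only the upper block via $j=2k-i$, which maps $\{k+1,\dots,2k-1\}$ onto $\{1,\dots,k-1\}$. Pairing $\frac{1}{i}$ with $\frac{1}{2k-i}$ through the reflection identity then collapses the matched terms into $(2k)\sum\frac{1}{(2k-i)i}$. Keeping the unpaired central term $i=k$ in the lower block leaves the remainder $+\frac{1}{k}$ and produces the second equality $\sum_{i=1}^{2k-1}\frac{1}{i}=(2k)\sum_{i=1}^{k-1}\frac{1}{(2k-i)i}+\frac{1}{k}$. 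Instead extending the reflected block to include $i=k$ (whose added term is $\frac{1}{2k-k}=\frac{1}{k}$) absorbs the central index into the product sum, raising its upper limit to $k$ and turning the correction into $-\frac{1}{k}$; this gives the first equality.

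The computations are entirely routine, so the only point that requires care is the bookkeeping of the central term. Because $2k-1$ is odd, the reflection $i\mapsto 2k-i$ has the fixed point $i=k$, so---unlike the even case of Lemma \ref{l 5.1}, where $i\mapsto 2k+1-i$ pairs the two halves with no fixed point---the two halves do not match up exactly here, and the unpaired term $\frac{1}{k}$ is exactly what distinguishes the $+\frac{1}{k}$ form from the $-\frac{1}{k}$ form. Tracking whether $i=k$ is assigned to the lower block or to the reflected block is therefore the main (and essentially the only) subtlety.
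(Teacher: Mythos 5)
Your proposal is correct and follows essentially the same route as the paper: split the harmonic sum at the midpoint, reflect the upper block via $i\mapsto 2k-i$, and pair terms through $\frac{1}{i}+\frac{1}{2k-i}=\frac{2k}{(2k-i)i}$, with the fixed point $i=k$ accounting for the $\pm\frac{1}{k}$ corrections. The only cosmetic difference is in the third equality, where you sum the reflection identity over the full range directly while the paper folds the symmetric sum $k\sum_{i=1}^{2k-1}\frac{1}{(2k-i)i}$ at $i=k$ and matches it to the second form; both rest on the same symmetry.
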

\begin{proof}
We split the sum, modify its parts, then recombine them to obtain, 
\begin{equation*}
\sum_{i=1}^{2k-1}{\frac{1}{i}}
=\sum_{i=1}^{k}{\frac{1}{i}}
+\sum_{i=k}^{2k-1}{\frac{1}{i}}-\frac{1}{k}
=\sum_{i=1}^{k}{\frac{1}{i}}
+\sum_{i=1}^{k}{\frac{1}{2k-i}}-\frac{1}{k}
=(2k)\sum_{i=1}^{k}{\frac{1}{(2k-i)i}}-\frac{1}{k}.
\end{equation*}
Similarly, 
\begin{equation*}
\sum_{i=1}^{2k-1}{\frac{1}{i}}
=\sum_{i=1}^{k-1}{\frac{1}{i}}
+\sum_{i=k+1}^{2k-1}{\frac{1}{i}}+\frac{1}{k}
=\sum_{i=1}^{k-1}{\frac{1}{i}}
+\sum_{i=1}^{k-1}{\frac{1}{2k-i}}+\frac{1}{k}
=(2k)\sum_{i=1}^{k-1}{\frac{1}{(2k-i)i}}+\frac{1}{k}.
\end{equation*}
Finally, to obtain the last part of this lemma, we perform the following, 
\begin{equation*}
\begin{split}
(k)\sum_{i=1}^{2k-1}{\frac{1}{(2k-i)i}}
&=(k)\left[\sum_{i=1}^{k-1}{\frac{1}{(2k-i)i}}
+\frac{1}{k^2}+\sum_{i=k+1}^{2k-1}{\frac{1}{(2k-i)i}} \right] \\
&=(k)\left[\sum_{i=1}^{k-1}{\frac{1}{(2k-i)i}}
+\sum_{i=1}^{k-1}{\frac{1}{(2k-i)i}} \right] +\frac{1}{k} \\
&=(2k)\sum_{i=1}^{k-1}{\frac{1}{(2k-i)i}}+\frac{1}{k}.
\end{split}
\end{equation*}
Equating, we obtain the lemma. 
\end{proof}
Now that both needed lemmas have been proven, we give the second identity. 
\begin{theorem} \label{t 5.4}
For any $n \in \mathbb{N^\ast}$, we have that 
\begin{equation*}
\frac{n+1}{2}\sum_{i=1}^{n}{\frac{1}{(n+1-i)i}}
=\sum_{i=1}^{n}{\frac{1}{i}}
.\end{equation*}
\end{theorem}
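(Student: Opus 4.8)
The plan is to split into cases according to the parity of $n$, since the two preceding lemmas have each been tailored to one parity class: Lemma \ref{l 5.1} handles even upper limits and Lemma \ref{l 5.2} handles odd ones. In both cases the desired identity turns out to be, essentially verbatim, the last equality already established in the corresponding lemma, so almost no new computation is required beyond matching up the parameters.

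First I would treat the even case. Writing $n = 2k$ with $k \in \mathbb{N}^*$, we have $n+1 = 2k+1$ and $\frac{n+1}{2} = \frac{2k+1}{2}$, so the left-hand side of the theorem reads $\frac{2k+1}{2}\sum_{i=1}^{2k}\frac{1}{(2k+1-i)i}$. By the final equality of Lemma \ref{l 5.1}, this equals $\sum_{i=1}^{2k}\frac{1}{i} = \sum_{i=1}^{n}\frac{1}{i}$, which is precisely the right-hand side.

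Next I would treat the odd case. Writing $n = 2k-1$ with $k \in \mathbb{N}^*$, we have $n+1 = 2k$ and $\frac{n+1}{2} = k$, so the left-hand side becomes $k\sum_{i=1}^{2k-1}\frac{1}{(2k-i)i}$. By the final equality of Lemma \ref{l 5.2}, this equals $\sum_{i=1}^{2k-1}\frac{1}{i} = \sum_{i=1}^{n}\frac{1}{i}$, again matching the right-hand side. Since every positive integer $n$ is either even or odd, the two cases exhaust all values of $n$ and the theorem follows.

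There is no serious obstacle here; the real work was front-loaded into Lemmas \ref{l 5.1} and \ref{l 5.2}, whose proofs rely on the symmetry trick of pairing the index $i$ with $(n+1-i)$ in the harmonic sum. The only subtlety worth flagging is the bookkeeping at the midpoint: for odd $n$ the central index $i = k$ satisfies $i = n+1-i$, so it is self-paired, which is exactly why Lemma \ref{l 5.2} carries the extra $\pm\frac{1}{k}$ correction terms, whereas for even $n$ no index is fixed by the pairing. Since the cancellation of those corrections is already verified inside the lemma, the present proposal genuinely reduces to the two parameter substitutions above.
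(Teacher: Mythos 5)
Your proof is correct and follows essentially the same route as the paper: a case split on the parity of $n$, with the even case read off from the last equality of Lemma \ref{l 5.1} and the odd case from the last equality of Lemma \ref{l 5.2}. The parameter substitutions ($n=2k$ giving $\frac{n+1}{2}=\frac{2k+1}{2}$, and $n=2k-1$ giving $\frac{n+1}{2}=k$) match the paper's argument exactly.
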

\begin{proof}
For this proof, a proof by case will be performed. Thus, the proof will be divided into proving two complementary cases: the case when $n$ is even and the case when $n$ is odd. 
\begin{itemize}
\item For $n=2k$ (where $k \in \mathbb{N^*}$): \\
This case corresponds to Lemma \ref{l 5.1} that we have already proven. 
\item For $n=2k-1$ (where $k \in \mathbb{N^*}$): \\
This case corresponds to Lemma \ref{l 5.2} that we have already proven.
\end{itemize}
The formula holds true for both even and odd values of $n$, hence, it holds for any $n\geq 1$.  
\end{proof}
\subsection{Reduction of the repeated ``Binomial-Sequence" sum}
In this section, we prove a generalization of Theorem \ref{t 5.1}. 
This generalization is illustrated in the following theorem. The following theorem simplifies the repeated ``Binomial-Sequence" sum.
\begin{theorem} \label{t 5.6}
For any $k \in \mathbb{N^*}$, for any $m,q,n \in \mathbb{N}$ where $n \geq q$ and for any sequence $a_N$ defined in the interval $[q,n]$, we have that 
\begin{equation*}
\sum_{N_k=q}^{n}{\cdots \sum_{N_2=q}^{N_3}{\sum_{N_1=q}^{N_2}{\binom{(N_2-N_1)+m}{m}a_{N_1}}}}
=\sum_{N=q}^{n}{\binom{(n-N)+m+k-1}{m+k-1}a_{N}}
.\end{equation*}
\end{theorem}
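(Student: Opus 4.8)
The plan is to prove the identity by induction on the number of nested summations $k$, in the same spirit as the proofs of Theorems \ref{t 2.2} and \ref{t 5.1}. The base case $k=1$ is immediate: with only one summation the outer bound $n$ plays the role of $N_2$, so the left-hand side reads $\sum_{N_1=q}^{n}\binom{(n-N_1)+m}{m}a_{N_1}$, and this coincides with the claimed right-hand side for $k=1$, since $\binom{(n-N)+m+k-1}{m+k-1}=\binom{(n-N)+m}{m}$ when $k=1$.

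For the induction step I would assume the statement holds for $k$ (for every admissible upper bound) and peel off the outermost summation $\sum_{N_{k+1}=q}^{n}$. The inner $k$-fold sum, now carrying upper bound $N_{k+1}$ in place of $n$, has exactly the same shape as the left-hand side of the theorem, so the induction hypothesis applies verbatim with $n$ replaced by $N_{k+1}$. This rewrites the whole expression as
$$
\sum_{N_{k+1}=q}^{n}{\sum_{N=q}^{N_{k+1}}{\binom{(N_{k+1}-N)+m+k-1}{m+k-1}a_{N}}}.
$$

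Next I would interchange the order of the two remaining summations over the region $q\le N\le N_{k+1}\le n$, which for fixed $N$ lets $N_{k+1}$ range from $N$ to $n$; pulling $a_N$ out yields $\sum_{N=q}^{n}a_{N}\sum_{N_{k+1}=N}^{n}\binom{(N_{k+1}-N)+m+k-1}{m+k-1}$. The inner sum is the decisive step: since its binomial factor depends only on the difference $N_{k+1}-N$, the substitution $i=(N_{k+1}-N)+m+k-1$ reindexes it as $\sum_{i=m+k-1}^{(n-N)+m+k-1}\binom{i}{m+k-1}$, a plain sum of binomial coefficients that Corollary \ref{c 2.1} (the hockey-stick identity) evaluates to $\binom{(n-N)+m+k}{m+k}$. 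Substituting back gives $\sum_{N=q}^{n}\binom{(n-N)+m+k}{m+k}a_{N}$, which is precisely the right-hand side for order $k+1$, closing the induction.

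The main obstacle is the bookkeeping at this swap-and-evaluate step: one must confirm that, after applying the induction hypothesis, the binomial weight truly depends only on $N_{k+1}-N$, so that the reindexing and Corollary \ref{c 2.1} (or, equivalently, the shifted form in Corollary \ref{c 2.2}) apply without stray boundary terms. The rest is routine algebra on the index shifts. As a sanity check, setting $m=0$ collapses $\binom{(N_2-N_1)+0}{0}$ to $1$ and recovers Theorem \ref{t 5.1} with its order equal to $k$ here, confirming that the statement is the intended generalization.
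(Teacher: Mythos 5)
Your proof is correct, but it takes a genuinely different route from the paper's. The paper proves the identity in two strokes: it reads Theorem \ref{t 5.1} backwards to rewrite the inner weight $\sum_{N_1=q}^{N_2}\binom{(N_2-N_1)+m}{m}a_{N_1}$ as an $(m+1)$-fold repeated sum of $a$, so that the entire left-hand side merges into a single $(m+k)$-fold repeated sum, and then applies Theorem \ref{t 5.1} forwards to that to land on the right-hand side. You instead induct on $k$: your base case is a notational identity (with $N_2$ read as $n$, which is the correct reading), and your induction step peels off the outer sum, applies the hypothesis with upper bound $N_{k+1}$, interchanges the two summations, and evaluates $\sum_{N_{k+1}=N}^{n}\binom{(N_{k+1}-N)+m+k-1}{m+k-1}$ via the hockey-stick identity (Corollary \ref{c 2.1}) to get $\binom{(n-N)+m+k}{m+k}$; the index bookkeeping you flag as the main hazard does check out. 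The paper's argument is shorter and makes the structural reason for the identity transparent (the binomial weight is itself a collapsed repeated sum), but it leans on Theorem \ref{t 5.1} twice. Your argument is logically independent of Theorem \ref{t 5.1}, using only Corollary \ref{c 2.1} and finite interchange of summation, so --- as your own $m=0$ sanity check indicates --- it yields an alternative proof of Theorem \ref{t 5.1} as a by-product rather than depending on it.
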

\begin{proof}
By applying Theorem \ref{t 5.1} to the inner sum, we have 
\begin{equation*}
\begin{split}
\sum_{N_k=q}^{n}{\cdots \sum_{N_2=q}^{N_3}{\sum_{N_1=q}^{N_2}{\binom{(N_2-N_1)+m}{m}a_{N_1}}}}
&=\sum_{N_k=q}^{n}{\cdots \sum_{N_2=q}^{N_3}{\sum_{j_{m+1}=q}^{N_2}{\cdots \sum_{j_{1}=q}^{j_2}{a_{j_1}}}}}\\
&=\sum_{i_{m+k}=q}^{n}{\cdots \sum_{i_2=q}^{i_3}{\sum_{i_1=q}^{i_2}{a_{i_1}}}}\\
&=\sum_{N=q}^{n}{\binom{(n-N)+m+k-1}{m+k-1}a_{N}}
\end{split}
\end{equation*}
\end{proof}
\begin{corollary} \label{c 5.3}
If the summations start at $1$, Theorem \ref{t 5.6} becomes, 
\begin{equation*}
\sum_{N_k=1}^{n}{\cdots \sum_{N_1=1}^{N_2}{\binom{(N_2-N_1)+m}{m}a_{N_1}}}
=\sum_{N=1}^{n}{\binom{(n-N)+m+k-1}{m+k-1}a_{N}}.
\end{equation*}
\end{corollary}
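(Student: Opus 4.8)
The plan is to obtain this identity as the $q=1$ specialization of Theorem \ref{t 5.6}. The lower bound $q$ enters Theorem \ref{t 5.6} only in two places: as the common lower limit shared by every one of the $k$ nested summations on the left, and through the hypothesis $n \geq q$. Setting $q=1$, which is legitimate whenever $n \geq 1$, therefore replaces each lower limit by $1$ and leaves nothing else to adjust on the left-hand side. On the right-hand side, the coefficient $\binom{(n-N)+m+k-1}{m+k-1}$ carries no dependence on $q$ whatsoever, so it passes through unchanged. Thus the substitution reproduces exactly the claimed identity, and the proof is complete in one line.

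Because this is a pure specialization of an already-established result, I do not anticipate any genuine obstacle; the only point worth checking is that the hypotheses of Theorem \ref{t 5.6} remain satisfied after the substitution, namely $k \in \mathbb{N}^*$, $m,n \in \mathbb{N}$ with $n \geq 1 = q$, and $a_N$ defined on $[1,n]$. Should a self-contained argument be preferred instead of citing the general theorem, one could re-run the same two-step reduction used for Theorem \ref{t 5.6} but with all bounds fixed at $1$: first invoke Theorem \ref{t 5.1} in its $q=1$ form (Corollary \ref{c 5.1}) to rewrite the binomial-weighted innermost sum as a plain repeated sum of order $m+1$, then observe that it stitches together with the remaining $k-1$ outer summations into a single plain repeated sum of order $m+k$, and finally apply Theorem \ref{t 5.1} once more at order $m+k$. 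Invoking the already-proven general statement is the most economical route, so I would present the corollary simply as the case $q=1$.
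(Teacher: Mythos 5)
Your proof is correct and matches the paper's approach: the corollary is simply Theorem \ref{t 5.6} specialized to $q=1$, with no further adjustment needed since the right-hand side does not depend on $q$. The additional self-contained re-derivation you sketch is sound but unnecessary.
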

For the simple ``Binomial-Sequence" sum, Theorem \ref{t 5.6} and Corollary \ref{c 5.3} reduce to the following. 
\begin{theorem} \label{t 5.5}
For any $m \in \mathbb{N}^{*}$, for any $q,n\in \mathbb{N}$ where $n \geq q$ and for any sequence $a_N$ defined in the interval $[q,n]$, we have that 
$$
\sum_{N_2=q}^{n}{\sum_{N_1=q}^{N_2}{\binom{(N_2-N_1)+(m-1)}{m-1}a_{N_1}}}
=\sum_{N=q}^{n}{\binom{(n-N)+m}{m}a_{N}}
.$$
\end{theorem}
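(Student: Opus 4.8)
The plan is to obtain this statement directly as a specialization of the already-established Theorem~\ref{t 5.6}, the reduction formula for the general repeated ``Binomial-Sequence'' sum. That theorem treats a $k$-fold nested sum whose innermost summand carries the weight $\binom{(N_2-N_1)+m}{m}$, and the present claim is exactly the two-fold case $k=2$ after a downward shift of the parameter $m$. No fresh induction or combinatorial argument is required, since the substantive work (the induction on the order of the nested sum) was already carried out in proving Theorem~\ref{t 5.6}.

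Concretely, I would proceed by two substitutions. First, set $k=2$ in Theorem~\ref{t 5.6}: the nested sum collapses to the double sum $\sum_{N_2=q}^{n}\sum_{N_1=q}^{N_2}\binom{(N_2-N_1)+m}{m}a_{N_1}$, and, since $m+k-1=m+1$ in this case, the right-hand side becomes $\sum_{N=q}^{n}\binom{(n-N)+m+1}{m+1}a_{N}$. Second, replace the parameter $m$ throughout this specialized identity by $m-1$; this substitution is legitimate because $m-1\in\mathbb{N}$ holds exactly when $m\geq 1$, which is precisely the hypothesis $m\in\mathbb{N}^{*}$. Under the replacement the weight becomes $\binom{(N_2-N_1)+(m-1)}{m-1}$ and the right-hand side entries become $(n-N)+m$ over $m$, yielding exactly the claimed identity.

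The only point demanding attention is the index bookkeeping across these two substitutions, together with the verification that the shifted parameter $m-1$ still lies in the range permitted by Theorem~\ref{t 5.6}; both are entirely routine. Hence there is no genuine obstacle, and the result stands as a clean corollary-style reduction of Theorem~\ref{t 5.6} (the $q=1$ case likewise descending from Corollary~\ref{c 5.3}).
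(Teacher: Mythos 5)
Your proposal is correct and matches the paper's treatment: the paper gives no separate proof of Theorem~\ref{t 5.5}, presenting it precisely as the specialization of Theorem~\ref{t 5.6} (and Corollary~\ref{c 5.3}) that you describe, namely $k=2$ followed by the shift $m\mapsto m-1$, which is legitimate under the hypothesis $m\in\mathbb{N}^{*}$.
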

\begin{corollary} \label{c 5.2}
If the summations start at $1$, Theorem \ref{t 5.5} becomes, 
$$
\sum_{N_2=1}^{n}{\sum_{N_1=1}^{N_2}{\binom{(N_2-N_1)+(m-1)}{m-1}a_{N_1}}}
=\sum_{N=1}^{n}{\binom{(n-N)+m}{m}a_{N}}
.$$
\end{corollary}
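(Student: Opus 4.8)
The plan is to recognize this statement as the two-fold specialization of Theorem~\ref{t 5.6} in which the number of nested summations is taken to be $k=2$ and the binomial parameter is shifted from $m$ to $m-1$. Since Theorem~\ref{t 5.6} is already available, the entire argument reduces to a substitution followed by a routine check on the index ranges.

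Concretely, I would first set $k=2$ in Theorem~\ref{t 5.6}, which gives
\begin{equation*}
\sum_{N_2=q}^{n}{\sum_{N_1=q}^{N_2}{\binom{(N_2-N_1)+m}{m}a_{N_1}}}
=\sum_{N=q}^{n}{\binom{(n-N)+m+1}{m+1}a_{N}},
\end{equation*}
using that $m+k-1=m+1$ when $k=2$. I would then replace $m$ by $m-1$ throughout; this turns the inner binomial parameter into $m-1$ and the outer one into $m$, yielding exactly the claimed identity. The only thing to verify is that the hypotheses remain consistent: Theorem~\ref{t 5.6} requires $m\in\mathbb{N}$, so after the shift $m\mapsto m-1$ we need $m-1\geq 0$, i.e.\ $m\geq 1$, which is precisely the hypothesis $m\in\mathbb{N}^{*}$ imposed here. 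The conditions $n\geq q$ and the domain of $a_N$ carry over unchanged.

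If a self-contained derivation were preferred instead of quoting Theorem~\ref{t 5.6}, I would apply Theorem~\ref{t 5.1} twice. By Theorem~\ref{t 5.1}, the order-$m$ repeated sum of $a_N$ with upper bound $N_2$ equals $\sum_{N_1=q}^{N_2}\binom{(N_2-N_1)+(m-1)}{m-1}a_{N_1}$, so the left-hand side of the claim is the result of summing such a repeated sum of order $m$ once more over $N_2$ from $q$ to $n$; that is, it is an order-$(m+1)$ repeated sum of $a_N$ with upper bound $n$. Applying Theorem~\ref{t 5.1} again, now with order $m+1$, collapses this to $\sum_{N=q}^{n}\binom{(n-N)+m}{m}a_{N}$, which is the right-hand side.

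There is essentially no obstacle in this proof: both routes are mechanical once the earlier reduction theorems are in hand. The only point demanding care is the bookkeeping of the parameter shift $m\mapsto m-1$ together with the level count $k=2$, and the accompanying check that the constraint $m\geq 1$ (rather than $m\geq 0$) is exactly what keeps the binomial coefficient $\binom{(N_2-N_1)+(m-1)}{m-1}$ well defined.
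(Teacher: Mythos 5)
Your proposal is correct and follows essentially the same route as the paper: Corollary \ref{c 5.2} is simply Theorem \ref{t 5.5} at $q=1$, and Theorem \ref{t 5.5} is itself obtained from Theorem \ref{t 5.6} by setting $k=2$ and shifting $m\mapsto m-1$, exactly as you do (your alternative via a double application of Theorem \ref{t 5.1} likewise mirrors how the paper proves Theorem \ref{t 5.6} in the first place). The only step you leave implicit is the final specialization $q=1$, which is the entire content of the corollary as stated, so be sure to record it.
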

\bibliographystyle{elsarticle-num}
\bibliography{Repeated_Sums_2}

\begin{thebibliography}{10}
\expandafter\ifx\csname url\endcsname\relax
  \def\url#1{\texttt{#1}}\fi
\expandafter\ifx\csname urlprefix\endcsname\relax\def\urlprefix{URL }\fi
\expandafter\ifx\csname href\endcsname\relax
  \def\href#1#2{#2} \def\path#1{#1}\fi

\bibitem{RecurrentSums}
R.~El~Haddad, Recurrent sums and partition identities, arXiv preprint
  arXiv:2101.09089 (2021).

\bibitem{MultipleSums}
R.~El~Haddad, Multiple sums and partition identities, arXiv preprint
  arXiv:2102.00821 (2021).

\bibitem{pascal1978traite}
B.~Pascal, Trait{\'e} du triangle arithm{\'e}tique avec quelques autres petits
  traitez sur la mesme mati{\`e}re, Chez Gvillavme Desprer, 1978.

\bibitem{pascal2011traite}
B.~Pascal, Trait{\'e} du triangle arithm{\'e}tique, {\'E}d. les Caract{\`e}res
  d'Ulysse, 2011.

\bibitem{whiteside1961newton}
D.~T. Whiteside, Newton's discovery of the general binomial theorem, The
  Mathematical Gazette (1961) 175--180.

\bibitem{whiteside1961henry}
D.~T. Whiteside, Henry briggs: The binomial theorem anticipated, The
  Mathematical Gazette 45~(351) (1961) 9--12.

\bibitem{polking1972leibniz}
J.~C. Polking, A leibniz formula for some differentiation operators of
  fractional order, Indiana University Mathematics Journal 21~(11) (1972)
  1019--1029.

\bibitem{mazkewitsch1963n}
D.~Mazkewitsch, The n-th derivative of a produc, The American Mathematical
  Monthly 70~(7) (1963) 739--742.

\bibitem{dybowski2009generalization}
R.~Dybowski, A generalization of the leibniz rule for derivatives (2009).

\bibitem{bol1983mathematical}
L.~Bol’shev, N.~Smirnov, Mathematical statistics tables, Moscow: Science
  (1983).

\bibitem{miller1954table}
J.~C.~P. Miller, Table of binomial coefficients, Vol.~3, Published for the
  Royal Society at the University Press, 1954.

\bibitem{royal1954mathematical}
R.~S. G. B. M.~T. Committee, J.~Miller, Mathematical Tables: Table of Binomial
  Coefficients, Published for the Royal Society at Cambridge UP, 1954.

\bibitem{olofsson2012probability}
P.~Olofsson, M.~Andersson, Probability, statistics, and stochastic processes
  (2012).

\bibitem{Oresme}
N.~Oresme, Quaestiones super geometriam Euclidis, Vol.~3, Brill Archive, 1961.

\bibitem{Mengoli}
P.~Mengoli, "praefatio [preface]". novae quadraturae arithmeticae, seu de
  additione fractionum [new arithmetic quadrature (i.e., integration), or on
  the addition of fractions]. bologna: Giacomo monti. (1650).

\bibitem{JohannBernoulli}
J.~Bernoulli, "corollary iii of de seriebus varia". opera omnia. lausanne \&
  basel: Marc-michel bousquet \& co. 4 (1742) 8.

\bibitem{JacobBernoulli1}
J.~Bernoulli, Propositiones arithmeticae de seriebus infinitis earumque summa
  finita [arithmetical propositions about infinite series and their finite
  sums]. basel: J. conrad. (1689).

\bibitem{JacobBernoulli2}
J.~Bernoulli, Ars Conjectandi, Opus Posthumum; Accedit Tractatus De Seriebus
  Infinitis, Et Epistola Gallic{\`e} scripta De Ludo Pilae Reticularis [Theory
  of inference, posthumous work. With the Treatise on infinite series…].,
  Thurnisii, 1713.

\end{thebibliography}

\end{document}